\numberwithin{equation}{section}
\font\tencyr=wncyr10 
\font\tencyi=wncyi10 
\font\tencysc=wncysc10 
\def\rus{\tencyr\cyracc}
\def\rusi{\tencyi\cyracc}
\def\rusc{\tencysc\cyracc}
\newtheorem{thm}{Theorem}[section]
\newtheorem{lm}[thm]{Lemma}
\newtheorem{cl}[thm]{Corollary}
\newtheorem{prop}[thm]{Proposition}
\theoremstyle{remark}
\newtheorem{rmk}[thm]{Remark}
\theoremstyle{definition}
\newtheorem{ex}[thm]{Example}
\newtheorem{df}{Definition}
\newcommand {\g}{{\mathfrak g}}
\newcommand {\q}{{\mathfrak q}}
\newcommand {\ca}{{\mathcal A}}
\newcommand {\cb}{{\mathcal B}}
\newcommand {\cf}{{\mathcal F}}
\newcommand {\co}{{\mathcal O}}
\newcommand {\BV}{{\mathbb V}}
\newcommand {\BW}{{\mathbb W}}
\newcommand {\BQ}{{\mathbb Q}}
\newcommand {\BZ}{{\mathbb Z}}
\newcommand {\BN}{{\mathbb N}}
\newcommand {\sfr}{\mathsf R}
\newcommand {\md}{/\!\!/}
\newcommand{\lb}{\lambda}
\newcommand{\ap}{\alpha}
\renewcommand{\le}{\leqslant}
\renewcommand{\ge}{\geqslant}
\newcommand{\curle}{\preccurlyeq}
\newcommand{\eus}{\EuScript}
\newcommand {\codim}{{\mathrm{codim}}}
\newcommand {\ed}{{\mathsf{ed}}}
\newcommand {\hd}{{\mathsf{hd}}}
\newcommand {\Lie}{{\mathrm{Lie\,}}}
\newcommand {\rk}{{\mathsf{rk\,}}}
\newcommand {\spe}{{\mathsf{Spec}}}
\newcommand {\trdeg}{{\mathrm{trdeg\,}}}
\newcommand {\sltri}{\mathfrak{sl}_3}
\newcommand {\GR}[2]{{\textrm{{\bf #1}}}_{#2}}
\newcommand {\ov}{\overline}
\newcommand {\un}{\underline}
\font\Bbbfont=msbm10 scaled 1200%
\def\bbk{\hbox {\Bbbfont\char'174}}
\begin{document}
\setlength{\parskip}{3pt plus 5pt minus 0pt}
\hfill { {\scriptsize December 7, 2008}}
\vskip1ex

\title[On the derived group of a maximal unipotent subgroup]
{Actions of the derived group of a maximal unipotent subgroup on $G$-varieties}
\author[D.\,Panyushev]{Dmitri I.~Panyushev}
\address[]{Independent University of Moscow,
Bol'shoi Vlasevskii per. 11, 119002 Moscow, \ Russia
\hfil\break\indent
Institute for Information Transmission Problems, B. Karetnyi per. 19, Moscow 127994
}
\email{panyush@mccme.ru}
\urladdr{http://www.mccme.ru/~panyush}
\maketitle

\section*{Introduction}
\noindent
The ground field $\bbk$ is algebraically closed and of characteristic zero.
Let $G$ be a semisimple simply-connected algebraic group over $\bbk$ and 
$U$ a maximal unipotent subgroup of $G$.
One of the fundamental invariant-theoretic facts, which goes back to 
Had\v{z}iev~\cite{had}, is that
$\bbk[G/U]$ is a finitely generated $\bbk$-algebra and regarded as $G$-module
it contains every finite-dimensional simple $G$-module exactly once.
From this, one readily deduces that the algebra of $U$-invariants, 
$\bbk[G/U]^U$, is polynomial. 
More precisely, choose a maximal torus $T\subset \mathsf{Norm}_G(U)$. 
Let $r$ be the rank of $G$,
$\varpi_1,\dots,\varpi_r$ the fundamental weights of $T$ corresponding to $U$, and 
$\ap_1,\dots,\ap_r$ the respective simple roots.
Set $\mathfrak X_+=\sum_{i=1}^r \BN\varpi_i$, and let
$\mathsf R(\lb)$ denote the simple $G$-module with highest weight $\lb\in\mathfrak X_+$.
Then 
\[
     \bbk[G/U]\simeq \bigoplus_{\lb\in\mathfrak X_+}\sfr(\lb).
\]
Let $f_i$ be a non-zero element of one-dimensional space $\mathsf R(\varpi_i)^U
\subset \bbk[G/U]^U$.
Then $\bbk[G/U]^U$ is freely generated by $f_1,\dots,f_r$.

For an affine $G$-variety $X$, the algebra of $U$-invariants, $\bbk[X]^U$,
is multigraded (by $T$-weights). If $X=V$ is a $G$-module, then there is an integral 
formula for the corresponding Poincar\'e series 
\cite[Theorem\,1]{br83}. Using that formula,
M.\,Brion discovered useful ``symmetries'' of the Poincar\'e series and applied them
(in case $G$ is simple)
to obtaining the classification of simple $G$-modules
with polynomial algebras  $\bbk[V]^U$
\cite[Ch.\,III]{br83}. Afterwards, I proved that similar ``symmetries'' of Poincar\'e
series occur for conical factorial $G$-varieties with only rational singularities
\cite{pa95}, \cite[Ch.\,5]{pa99}. Since there is no integral formula for Poincar\'e series in general, another technique was employed.  Namely, I used the transfer principle for $U$, 
``symmetries''  of the Poincar\'e series of $\bbk[G/U]$, and results of F.\,Knop relating 
the canonical module of an algebra and a subalgebra of invariants \cite{kn86}.

Our objective is to extend  these results to the derived group $U'=(U,U)$.
In Section~\ref{sect:inv}, we prove that 
$\sfr(\lb)^{U'}$ is a cyclic $U/U'$-module for any $\lb\in\mathfrak X_+$
and 
$\dim \sfr(\lb)^{U'}=\prod_{i=1}^r((\lb, \ap_i^\vee)+1)$, where $\ap_i^\vee=2\ap_i/(\ap_i,\ap_i)$,
see Theorem~\ref{thm:main1}. From these properties, we 
deduce that $\bbk[G/U]^{U'}$ is a polynomial algebra of Krull 
dimension $2r$. More precisely,  we have 
$\dim\sfr(\varpi_i)^{U'}=2$ for each
$i$, and if $(f_i,\tilde f_i)$ is a basis in $\sfr(\varpi_i)^{U'}$, then
$\{f_i,\tilde f_i\mid i=1,\dots,r\}$ freely generate $\bbk[G/U]^{U'}$ 
(see Theorem~\ref{thm:main2}).
This fact seems to have remained unnoticed before. As a by-product, we show that 
the subgroup $TU'\subset G$ is epimorphic (i.e., $\bbk[G]^{TU'}=\bbk$) if and only if
$G\ne SL_2, SL_3$.

Section~\ref{sect:gen_prop} is devoted to general properties of  $U'$-actions on
affine $G$-varieties.  We show that  $\bbk[G/U']$ is generated by fundamental
$G$-modules sitting in it, and using this fact we explicitly construct an equivariant affine embedding
of $G/U'$ with the boundary of codimension $\ge 2$ (Theorem~\ref{thm:model}).
Since
$\bbk[G/U']$ is finitely generated, $\bbk[X]^{U'}$ is finitely generated for any affine $G$-variety $X$ \cite{gr}. 
Furthermore,
$\spe(\bbk[X]^{U'})$ inherits some other good properties of $X$ (factoriality, rationality of
singularities) (Theorem~\ref{thm:RS}).
We also give an algorithm for constructing a finite generating system of
$\bbk[X]^{U'}$, if generators of $\bbk[X]^U$ are already known (Theorem~\ref{thm:base-X-U'}). 
This appears to be very helpful in classifying simple $G$-modules with polynomial algebras
of $U'$-invariants (for $G$  simple).

In Section~\ref{sect:poinc}, we study the Poincar\'e series of multigraded algebras
$\bbk[X]^{U'}$, where $X$ is factorial affine $G$-variety with only rational singularities (e.g.
$X$ can be a $G$-module). Assuming that $G\ne SL_2, SL_3$, 
we obtain analogues of our results for Poincar\'e series of $\bbk[X]^U$.
One of the practical outcomes concerns the case in which
$V$ is a $G$-module and $\bbk[V]^{U'}$ is polynomial. If $d_1,\dots, d_m$
(resp. $\mu_1,\dots,\mu_m$) are the degrees (resp. $T$-weights)
of basic  $U'$-invariants, then $\sum_i d_i\le \dim V$ and 
$\sum_i \mu_i \le 2\rho-\sum_{j=1}^r \ap_j$, where $\rho=\sum_{j=1}^r \varpi_j$. 
The second inequality requires some explanations, though. Unlike the case of 
$U$-invariants, there is no natural free monoid
containing the $T$-weights of all $U'$-invariants. But for $G\ne SL_2, SL_3$, these $T$-weights generate a convex cone. Therefore,
such a free monoid does exist, and  the above inequality for $\sum_i \mu_i$
is understood as componentwise inequality with respect to any such monoid and its basis.
Moreover,  $\sum_i d_i= \dim V$ if and only if  $\sum_i \mu_i = 2\rho-\sum_{j=1}^r \ap_j$.
Again, these relations are to be useful for our classification of polynomial algebras
$\bbk[V]^{U'}$, which is obtained in Section~\ref{sect:classif}. Note that $2\rho-\sum_{j=1}^r \ap_j$
is the sum of all positive non-simple roots, i.e., the roots of $U'$.

Section~\ref{sect:combin} is a kind of combinatorial digression. 
Let $\mathcal C$ be the cone generated by all $T$-weights occurring in $\bbk[G/U]^{U'}$.
Our description of generators shows that $\mathcal C$ is actually generated by 
$\varpi_i, \varpi_i-\ap_i$ ($i=1,\dots,r$). We prove that the dual cone of $\mathcal C$ is 
generated by the  non-simple positive roots (Theorem~\ref{thm:dual-cone}). 
We also obtain  
a partition of $\mathcal C$ in simplicial cones, which is parametrised by the disjoint subsets on the Dynkin diagram of $G$.

My motivation to consider $U'$-invariants arose from attempts to understand the structure of centralisers of certain nilpotent elements in simple Lie algebras. For applications to centralisers one needs Theorem~\ref{thm:main1} in case of $SL_3$, and this was the result
initially proved. This application will be the subject of a subsequent article.

\noindent
\un{\sl Notation}.
If an algebraic group $Q$ acts on an irreducible affine variety $X$, then 

\textbullet \ $Q_x=\{q\in Q\mid q{\cdot}x=x\}$ is the stabiliser of $x\in X$;

\textbullet \ $\bbk[X]^Q$ is the algebra of $Q$-invariant polynomial functions on $X$.
If $\bbk[X]^Q$ is finitely generated, then $X\md Q:=\spe(\bbk[X]^Q)$, and
the {\it quotient morphism\/} $\pi_{X,Q}: X\to X\md Q$ is the mapping associated with
the embedding $\bbk[X]^Q \hookrightarrow \bbk[X]$.

\textbullet  \ $\bbk(X)^Q$ is the field of $Q$-invariant rational functions;

\noindent
Throughout, $G$ is a semisimple simply-connected algebraic group and $r=\rk G$.
\\ \indent
{\bf --}  $\Delta$ is the root system of $(G,T)$, 
$\Pi=\{\ap_1,\dots,\ap_r\}$ are the simple roots corresponding to $U$, and 
$\varpi_1,\dots,\varpi_r$ are the corresponding fundamental weights. 
\\ \indent
{\bf --}  The character group of $T$ is denoted by $\mathfrak X$. All roots and weights are regarded as elements of 
the $r$-dimensional vector space $\mathfrak X\otimes \BQ=:\mathfrak X_\BQ$.
For any $\lb\in\mathfrak X_+$, $\lb^*$ is the highest weight of the dual $G$-module.
The $\mu$-weight space of $\mathsf R(\lb)$ is denoted by $\sfr(\lb)_\mu$.

\vskip1ex
{\small {\bf Acknowledgements.}  
This work was done during my  stay at the Max-Planck-Institut f\"ur Mathematik (Bonn). 
I am grateful to this institution for the warm hospitality and support.}

\section{The algebra of $U'$-invariants on $G/U$} 
\label{sect:inv}

\noindent
For any $\lb\in\mathfrak X_+$, we wish to study the subspace $\sfr(\lb)^{U'}$.
First of all, we notice that $B\subset \mathsf{Norm}_G(U')$ 
(actually, they are equal if $G$ has no simple factors $SL_2$) and therefore $\sfr(\lb)^{U'}$ is a $B/U'$-module.
In particular, 
$T$ normalises $U'$ and hence $\sfr(\lb)^{U'}$ is a direct sum of its own weight spaces.
Let $\eus P(\lb)$ be the set of weights of $\sfr(\lb)$.  It is a poset with respect to the
{\it root order}. This means that $\mu$ {covers\/} $\nu$ if $\mu-\nu\in\Pi$.
Then $\lb$ is the unique maximal element of $\eus P(\lb)$.
Let $e_i\in\g=\Lie(G)$ be a root vector corresponding to $\ap_i\in\Pi$.

Given a nonzero $x\in \sfr(\lb)^{U'}$, consider
\[
  M_x=\{(n_1,\dots,n_r)\in \BN^r\mid e_1^{n_1}\dots e_r^{n_r}(x)\ne 0\} .
\]
We also write $\boldsymbol n=(n_1,\dots,n_r)$ and $\boldsymbol e^{\boldsymbol n}=
e_1^{n_1}\dots e_r^{n_r}$. Notice that $\boldsymbol e^{\boldsymbol n}(x)$ does not
depend on the ordering of $e_i$'s since 
$[e_i,e_j]\in \Lie(U')$  for all $i,j$ and $\sfr(\lb)^{U'}$ is an $U/U'$-module. 
We regard $M_x$ as poset with respect to the
componentwise inequalities, i.e., 
$\boldsymbol n \succcurlyeq \boldsymbol {n'}$ if and only if $n_i\ge n'_i$ for all $i$.
Clearly, $M_x$ is finite and $(0,\dots,0)$ is the unique minimal element of it.

\begin{lm}   \label{lm:unique-max}
Let $x\in\sfr(\lb)^{U'}$ be a weight vector.
The poset $M_x$ contains a unique maximal element, say $\boldsymbol m=
(m_1,\dots,m_r)$. Furthermore, 
$\boldsymbol e^{\boldsymbol m}(x)$ is a highest vector of\/ $\sfr(\lb)$.
\end{lm}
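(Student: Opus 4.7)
The strategy is to show that if $\boldsymbol n$ is any maximal element of the finite poset $M_x$, then $\boldsymbol e^{\boldsymbol n}(x)$ is forced to be a highest weight vector of $\sfr(\lb)$; the uniqueness of $\boldsymbol n$ then follows from the uniqueness of the highest weight $\lb$.

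First I would check that $\boldsymbol e^{\boldsymbol n}(x)\in\sfr(\lb)^{U'}$ for \emph{every} $\boldsymbol n\in\BN^r$. This is where the normality of $U'$ in $U$ enters: one has $[\Lie(U'),e_i]\subseteq \Lie(U')$, and an induction on $|\boldsymbol n|=\sum n_i$ yields the claim. Indeed, if $f\in\Lie(U')$ annihilates $z=\boldsymbol e^{\boldsymbol n}(x)$, then $f(e_iz)=e_if(z)+[f,e_i](z)=0$, the second term vanishing because $[f,e_i]\in\Lie(U')$ also kills $z$ by the inductive hypothesis. The same commutator identity explains why $\boldsymbol e^{\boldsymbol n}(x)$ is well defined independently of the order of the $e_i$'s in the product.

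Now let $\boldsymbol n=(n_1,\dots,n_r)$ be a maximal element of $M_x$ and set $y=\boldsymbol e^{\boldsymbol n}(x)$. Maximality forces $e_iy=0$ for every $i$: otherwise, replacing $n_i$ by $n_i+1$ would produce an element of $M_x$ strictly above $\boldsymbol n$. Combined with the $U'$-invariance of $y$ and the vector-space decomposition $\Lie(U)=\Lie(U')+\bbk e_1+\dots+\bbk e_r$, this shows that $y$ is annihilated by all of $\Lie(U)$ and hence (since $U$ is connected) is $U$-invariant. Because $\sfr(\lb)^U$ is one-dimensional and spanned by a highest vector of weight $\lb$, the nonzero vector $y$ must be a scalar multiple of it.

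Consequently $\mathrm{wt}(y)=\lb$, so the identity $\mathrm{wt}(x)+\sum n_i\ap_i=\lb$ and the linear independence of the simple roots determine $\boldsymbol n$ uniquely from $x$. Thus $M_x$ has a unique maximal element $\boldsymbol m$; since $M_x$ is finite, $\boldsymbol m$ is in fact the maximum, and $\boldsymbol e^{\boldsymbol m}(x)$ is a highest weight vector of $\sfr(\lb)$. I expect the only subtle point to be the inductive argument for $U'$-invariance, but once the normality of $U'$ in $U$ is in hand it reduces to a two-line commutator computation.
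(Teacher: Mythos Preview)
Your proof is correct and follows essentially the same line as the paper's: a maximal $\boldsymbol n$ forces $e_i(\boldsymbol e^{\boldsymbol n}(x))=0$ for all $i$, so $\boldsymbol e^{\boldsymbol n}(x)$ is a highest vector, and then the weight identity $\mathrm{wt}(x)+\sum n_i\ap_i=\lb$ together with linear independence of the simple roots pins down $\boldsymbol n$ uniquely. Your inductive verification that $\boldsymbol e^{\boldsymbol n}(x)\in\sfr(\lb)^{U'}$ is not wrong, but it is already absorbed in the paper's remark (just before the lemma) that $\sfr(\lb)^{U'}$ is a $U/U'$-module, which is an immediate consequence of $U'\trianglelefteq U$.
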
\begin{proof}
If ${\boldsymbol n}\in M_x$ is maximal, then $e_i(\boldsymbol e^{\boldsymbol n}(x))=0$ for each
$i$. Hence $\boldsymbol e^{\boldsymbol n}(x)$ is a highest vector of $\sfr(\lb)$.
Next, 
\[
  \textrm{the weight of }\boldsymbol e^{\boldsymbol n}(x)=
  (\textrm{the weight of }x)+\sum_{i=1}^r n_i\ap_i \ .
\]
Hence all nonzero vectors of the form $\boldsymbol e^{\boldsymbol n}(x)$ are 
linearly independent. This yields the uniqueness of a maximal element.
\end{proof}

\begin{cl}
$M_x$ is a multi-dimensional array, i.e., $M_x=\{(n_1,\dots,n_r)\mid 0\le n_i\le m_i
\ \forall i\}$.
\end{cl}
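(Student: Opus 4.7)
The plan is to prove the two inclusions separately. Write $B_{\boldsymbol m}:=\{\boldsymbol n\in\BN^r\mid 0\le n_i\le m_i\text{ for all }i\}$. The inclusion $M_x\subseteq B_{\boldsymbol m}$ is really a restatement of the uniqueness of the maximum established in Lemma~\ref{lm:unique-max}, while the reverse inclusion $B_{\boldsymbol m}\subseteq M_x$ follows from the commutativity of the operators $e_i$ on the subspace $\sfr(\lb)^{U'}$.

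For $M_x\subseteq B_{\boldsymbol m}$: take any $\boldsymbol n\in M_x$. Since $M_x$ is finite, one can build an increasing chain $\boldsymbol n=\boldsymbol n^{(0)}\prec \boldsymbol n^{(1)}\prec\cdots\prec \boldsymbol n^{(k)}$ inside $M_x$ by incrementing a single coordinate at each step, terminating at some element that is maximal in $M_x$. By the uniqueness part of Lemma~\ref{lm:unique-max}, $\boldsymbol n^{(k)}=\boldsymbol m$, and therefore $\boldsymbol n\preccurlyeq \boldsymbol m$.

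For $B_{\boldsymbol m}\subseteq M_x$: the key point, already exploited in the previous lemma, is that $\sfr(\lb)^{U'}$ is an $U/U'$-module, so that each $e_i$ maps $\sfr(\lb)^{U'}$ into itself, and on this subspace the $e_i$'s commute pairwise (because $[e_i,e_j]\in\Lie(U')$ acts as zero on $U'$-invariants). Given any $\boldsymbol n\preccurlyeq \boldsymbol m$, this permits the factorisation
\[
  \boldsymbol e^{\boldsymbol m}(x)=\boldsymbol e^{\boldsymbol m-\boldsymbol n}\bigl(\boldsymbol e^{\boldsymbol n}(x)\bigr).
\]
If $\boldsymbol e^{\boldsymbol n}(x)=0$, the right-hand side would vanish, contradicting $\boldsymbol m\in M_x$. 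Hence $\boldsymbol n\in M_x$.

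The only subtlety worth flagging is the need to ensure the above factorisation is legitimate, namely that $\boldsymbol e^{\boldsymbol n}(x)$ still lies in $\sfr(\lb)^{U'}$ so that the subsequent $\boldsymbol e^{\boldsymbol m-\boldsymbol n}$ may be applied without regard to ordering; this is precisely the $U/U'$-stability of $\sfr(\lb)^{U'}$ recalled above. Beyond this observation, there is no genuine obstacle: the corollary is essentially a direct repackaging of Lemma~\ref{lm:unique-max} together with the commutation of the $e_i$'s on $U'$-invariants.
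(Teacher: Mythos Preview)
Your proof is correct and is exactly the argument the paper leaves implicit (the corollary is stated there without proof, as an immediate consequence of Lemma~\ref{lm:unique-max} together with the commutation of the $e_i$ on $\sfr(\lb)^{U'}$). One small streamlining: for the inclusion $M_x\subseteq B_{\boldsymbol m}$ you do not actually need single-coordinate increments, since in any finite poset every element lies below some maximal element and here the maximal element is unique; the factorisation argument is only really needed for the reverse inclusion.
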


\noindent
Let $I_\lb$ denote the set of $T$-weights in $\sfr(\lb)^{U'}$. It is a subset of $\eus P(\lb)$.
\begin{prop}   \label{prop:free}
For any $\lb\in \mathfrak X_+$, $\sfr(\lb)^{U'}$ is a multiplicity free $T$-module.
More precisely, 
\[
   \sfr(\lb)^{U'}=\bigoplus_{\mu\in I_\lb} \sfr(\lb)^{U'}_\mu\ ,
\]
where $\dim \sfr(\lb)^{U'}_\mu=1$ for each $\mu$ and $I_\lb\subset \{\lb-\sum_ia_i\ap_i \mid
0\le a_i\le (\lb,\ap_i^\vee)\}$.
\end{prop}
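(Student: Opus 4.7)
The plan is to exploit Lemma~\ref{lm:unique-max} twice: first to force multiplicity-freeness via the uniqueness of the maximal element $\boldsymbol m\in M_x$, and second to bound the coordinates of $\boldsymbol m$ using the fact that $\boldsymbol e^{\boldsymbol m}(x)$ is a highest weight vector of $\sfr(\lb)$.

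For multiplicity-freeness, I would take two nonzero weight vectors $x_1,x_2\in\sfr(\lb)^{U'}_\mu$ and apply Lemma~\ref{lm:unique-max} to each, obtaining unique maxima $\boldsymbol m_1,\boldsymbol m_2$ with both $\boldsymbol e^{\boldsymbol m_j}(x_j)$ highest weight vectors of $\sfr(\lb)$. Since the weight of $\boldsymbol e^{\boldsymbol m_j}(x_j)$ equals $\mu+\sum_i m_{j,i}\ap_i$, and this must equal $\lb$, the linear independence of simple roots forces $\boldsymbol m_1=\boldsymbol m_2=:\boldsymbol m$. As $\dim\sfr(\lb)_\lb=1$, there exists $c\in\bbk$ with $\boldsymbol e^{\boldsymbol m}(x_1-cx_2)=0$. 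If $x_1-cx_2$ were nonzero, Lemma~\ref{lm:unique-max} applied to it would yield a maximal element $\boldsymbol m'\in M_{x_1-cx_2}$ with $\boldsymbol e^{\boldsymbol m'}(x_1-cx_2)\ne 0$; the same weight-matching argument forces $\boldsymbol m'=\boldsymbol m$, a contradiction. Hence $x_1=cx_2$, proving $\dim\sfr(\lb)^{U'}_\mu\le 1$.

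For the bound on $I_\lb$, given a nonzero $x\in\sfr(\lb)^{U'}_\mu$, write $\mu=\lb-\sum_i m_i\ap_i$ where $\boldsymbol m=(m_1,\dots,m_r)$ is the unique maximum of $M_x$ (so $m_i\in\BN$). Fix an index $i$ and consider
\[
  y=\boldsymbol e^{\boldsymbol m-m_i\boldsymbol 1_i}(x),
\]
i.e., apply $e_j^{m_j}$ for all $j\ne i$. By the Corollary, $\boldsymbol m-m_i\boldsymbol 1_i\in M_x$, so $y\ne 0$, and its $T$-weight is $\lb-m_i\ap_i$. Thus $\lb-m_i\ap_i$ is a weight of $\sfr(\lb)$. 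A short $\mathfrak{sl}_2(\ap_i)$-argument then yields $m_i\le(\lb,\ap_i^\vee)$: applying the simple reflection $s_i$ produces $\lb+(m_i-(\lb,\ap_i^\vee))\ap_i$, which can be a weight of $\sfr(\lb)$ only when this expression is $\le\lb$ in the root order, i.e., $m_i\le(\lb,\ap_i^\vee)$.

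There is no serious obstacle: Lemma~\ref{lm:unique-max} and its Corollary do the heavy lifting. The only points requiring care are the weight bookkeeping in the multiplicity-freeness argument and the invocation of $\mathfrak{sl}_2$-theory via $s_i$ for the coordinatewise bound.
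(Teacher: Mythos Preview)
Your proposal is correct and follows essentially the same route as the paper: both arguments deduce $\boldsymbol m_1=\boldsymbol m_2$ from the weight equation $\mu+\sum_i m_i\ap_i=\lb$, derive a contradiction from $\boldsymbol e^{\boldsymbol m}(x_1-cx_2)=0$ via Lemma~\ref{lm:unique-max}, and obtain the bound $m_i\le(\lb,\ap_i^\vee)$ by noting that $\lb-m_i\ap_i\in\eus P(\lb)$ (using the Corollary that $M_x$ is an array). Your write-up is simply more explicit where the paper is terse.
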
\begin{proof}
If $x\in \sfr(\lb)^{U'}_\mu$ and $(m_1,\dots,m_r)$ is the maximal element of $M_x$, 
then $\mu+\sum_i m_i\ap_i=\lb$ and $\mu+\sum_i n_i\ap_i\in \eus P(\lb)$ for any
$(n_1,\dots,n_r)\in M_x$. In particular, $\lb-m_i\ap_i\in\eus P(\lb)$. Whence
$m_i\le (\lb,\ap_i^\vee)$ and $I_\lb\subset \{\lb-\sum_ia_i\ap_i \mid
0\le a_i\le (\lb,\ap_i^\vee)\}$.

Assume that $x,y\in \sfr(\lb)^{U'}_\mu$ are linearly independent. It follows from 
Lemma~\ref{lm:unique-max} that $M_x=M_y$. Since $\boldsymbol e^{\boldsymbol m}(x),
\boldsymbol e^{\boldsymbol m}(y)\in \sfr(\lb)_\lb$, we have 
$\boldsymbol e^{\boldsymbol m}(x-cy)=0$ for some $c\in\bbk^\times$. This means that
$M_{x-cy}\ne M_x$, a contradiction! Thus, each $\sfr(\lb)^{U'}_\mu$ is one-dimensional.
\end{proof}

\begin{lm}  \label{lm:connected}
$I_\lb$ is a connected subset
in the Hasse diagram of $\eus P(\lb)$ that contains $\lb$.
\end{lm}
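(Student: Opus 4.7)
The plan is to prove two things: $\lb \in I_\lb$, and every $\mu \in I_\lb$ with $\mu \ne \lb$ is covered (in the Hasse diagram of $\eus P(\lb)$) by some $\mu' \in I_\lb$ of the form $\mu + \ap_i$. Together these yield connectedness, because from any $\mu \in I_\lb$ one can ascend repeatedly through elements of $I_\lb$ until reaching $\lb$. The containment $\lb \in I_\lb$ is immediate: the highest weight vector of $\sfr(\lb)$ is $U$-invariant, hence $U'$-invariant.

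The crux of the second step is the claim that each $e_i$ preserves $\sfr(\lb)^{U'}$, i.e., $e_i \cdot \sfr(\lb)^{U'} \subset \sfr(\lb)^{U'}$. To verify this, take $x \in \sfr(\lb)^{U'}$ and a positive non-simple root $\beta$ (so that $e_\beta$ belongs to a basis of $\Lie(U')$). Then
\[
  e_\beta(e_i x) = e_i(e_\beta x) + [e_\beta, e_i](x) = [e_\beta, e_i](x),
\]
since $e_\beta x=0$. The bracket $[e_\beta,e_i]$ is a scalar multiple of $e_{\beta+\ap_i}$ when $\beta+\ap_i\in\Delta$, and vanishes otherwise. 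But $\beta+\ap_i$ cannot be simple: the equation $\beta+\ap_i = \ap_j$ would force $\beta = \ap_j-\ap_i$, which is never a positive root in a reduced root system. Hence in the nonzero case $e_{\beta+\ap_i} \in \Lie(U')$ and kills $x$, giving $e_\beta(e_i x) = 0$.

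Now take any $\mu \in I_\lb$ with $\mu \ne \lb$, choose a nonzero weight vector $x \in \sfr(\lb)^{U'}_\mu$, and invoke Lemma~\ref{lm:unique-max} to obtain the unique maximum $\boldsymbol m=(m_1,\dots,m_r)$ of $M_x$, which satisfies $\mu + \sum_i m_i\ap_i = \lb$. Since $\mu \ne \lb$, at least one $m_i \ge 1$, and therefore $e_i x$ is a nonzero weight vector of weight $\mu+\ap_i$. By the preceding paragraph $e_i x \in \sfr(\lb)^{U'}$, whence $\mu+\ap_i \in I_\lb$ and $\mu + \ap_i$ covers $\mu$ in the root order. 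Iterating this step exhibits a chain in $I_\lb$ from any given $\mu$ up to $\lb$.

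I do not expect a serious obstacle. The entire argument rests on the bracket identity together with the root-system fact that $\ap_j - \ap_i$ is never a positive root; the only mild caveat concerns simple factors of type $SL_2$, for which the corresponding $U'$ is trivial and the relevant part of the claim becomes vacuous, so nothing is lost.
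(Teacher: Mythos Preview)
Your proof is correct and follows essentially the same approach as the paper: show $\lb\in I_\lb$, show the $e_i$ preserve $\sfr(\lb)^{U'}$, and then climb from any $\mu\in I_\lb$ up to $\lb$ one simple root at a time. The only cosmetic differences are that the paper has already recorded the $U/U'$-module structure of $\sfr(\lb)^{U'}$ (from $B\subset\mathsf{Norm}_G(U')$) rather than recomputing it via your bracket argument, and it phrases the ``some $e_i x\ne 0$'' step directly (if all $e_i$ kill $v$ then $v$ is a highest vector) rather than routing through Lemma~\ref{lm:unique-max}.
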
\begin{proof}
Indeed, suppose $0\ne v\in \sfr(\lb)^{U'}_\mu$. If $e_{\ap_i}{\cdot}v=0$ for all $i$, then
$v$ is a $U$-invariant and hence $\mu=\lb$. Otherwise, we have
$e_{\ap_i}{\cdot}v\ne 0$ for some $i$
and therefore $\mu+\ap_i$ is also a weight of $\sfr(\lb)^{U'}$. Then we argue by induction.
\end{proof}

\begin{prop}         \label{fundam}
For any fundamental weight $\varpi_i$, we have
$\sfr(\varpi_i)^{U'}=\sfr(\varpi_i)_{\varpi_i}\oplus\sfr(\varpi_i)_{\varpi_i-\ap_i}$.
In particular, $I_{\varpi_i}=\{\varpi_i,\varpi_i-\ap_i\}$ and $\dim\sfr(\varpi_i)^{U'}=2$.
\end{prop}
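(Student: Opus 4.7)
The plan is to combine the upper bound from Proposition~\ref{prop:free} with an explicit construction of a weight vector in $\sfr(\varpi_i)_{\varpi_i-\ap_i}$ that is actually $U'$-invariant, thereby forcing both weights into $I_{\varpi_i}$.

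First I would apply Proposition~\ref{prop:free}. Since $(\varpi_i,\ap_j^\vee)=\delta_{ij}$, its bound collapses to $I_{\varpi_i}\subset\{\varpi_i - a_i\ap_i : 0\le a_i\le 1\}=\{\varpi_i,\varpi_i-\ap_i\}$, and Lemma~\ref{lm:connected} guarantees $\varpi_i\in I_{\varpi_i}$. So the entire task reduces to exhibiting a nonzero $U'$-invariant of weight $\varpi_i-\ap_i$.

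Let $v_{\varpi_i}$ be a highest weight vector of $\sfr(\varpi_i)$, and let $f_j\in\g_{-\ap_j}$ be root vectors. By the standard $\mathfrak{sl}_2$-theory applied to the simple root $\ap_i$, the vector $f_i{\cdot}v_{\varpi_i}$ is nonzero (because $(\varpi_i,\ap_i^\vee)=1$) and spans the one-dimensional weight space $\sfr(\varpi_i)_{\varpi_i-\ap_i}$. I would then verify that $f_i{\cdot}v_{\varpi_i}$ is killed by every root vector $e_\ap$ with $\ap$ a \emph{non-simple} positive root (these span $\Lie(U')$). Since $e_\ap{\cdot}v_{\varpi_i}=0$, one has $e_\ap f_i{\cdot}v_{\varpi_i}=[e_\ap,f_i]{\cdot}v_{\varpi_i}$, and $[e_\ap,f_i]$ is either zero or a nonzero multiple of a root vector $e_{\ap-\ap_i}$.

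The key combinatorial step will be to show that $\ap-\ap_i$, when it is a root, is necessarily a positive root (so that $e_{\ap-\ap_i}$ annihilates $v_{\varpi_i}$). Writing $\ap=\sum_j c_j\ap_j$ with $c_j\in\BN$, one analyses the two cases: if $c_i\ge 1$ then $\ap-\ap_i=(c_i-1)\ap_i+\sum_{j\ne i}c_j\ap_j$ has non-negative coefficients, so it is either $0$ (excluded because $\ap\ne\ap_i$) or a positive root; if $c_i=0$ then $\ap-\ap_i$ has coefficient $-1$ at $\ap_i$ and at least one strictly positive coefficient (since $\ap\ne 0$ is non-simple), and such a vector with mixed signs cannot be a root. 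This is the only mildly delicate point but is pure root-system bookkeeping. Concluding, $f_i{\cdot}v_{\varpi_i}\in\sfr(\varpi_i)^{U'}_{\varpi_i-\ap_i}$, which together with the upper bound gives $\sfr(\varpi_i)^{U'}=\sfr(\varpi_i)_{\varpi_i}\oplus\sfr(\varpi_i)_{\varpi_i-\ap_i}$ and $\dim\sfr(\varpi_i)^{U'}=2$.
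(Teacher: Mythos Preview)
Your proof is correct, but the two halves are packaged differently from the paper's. For the \emph{upper bound} you invoke Proposition~\ref{prop:free}, whose inclusion $I_{\varpi_i}\subset\{\varpi_i-\sum_j a_j\ap_j : 0\le a_j\le(\varpi_i,\ap_j^\vee)\}$ collapses immediately to $\{\varpi_i,\varpi_i-\ap_i\}$; this is cleaner than the paper's route, which instead examines the weights $\varpi_i-\ap_i-\ap_j$ directly (for $\ap_j$ adjacent to $\ap_i$), shows via Kostant's multiplicity formula that those weight spaces are one-dimensional and not $U'$-fixed (since $e_{\ap_i+\ap_j}$ does not kill them), and then appeals to the connectedness Lemma~\ref{lm:connected}. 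Conversely, for the \emph{lower bound} the paper simply declares the inclusion $\sfr(\varpi_i)_{\varpi_i}\oplus\sfr(\varpi_i)_{\varpi_i-\ap_i}\subset\sfr(\varpi_i)^{U'}$ ``obvious'' (the implicit reason being that $\varpi_i-\ap_i+\ap$ is not a weight of $\sfr(\varpi_i)$ for any non-simple positive $\ap$), whereas you spell out an explicit commutator computation. Your use of Proposition~\ref{prop:free} buys a shorter argument; the paper's hands-on analysis of the adjacent weight spaces is a bit more self-contained and previews the kind of reasoning used later. Your commutator verification is fine, though the one-line weight-space observation $\sfr(\varpi_i)_{\varpi_i-\ap_i+\ap}=0$ would have sufficed.
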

\begin{proof} 
Note that $\varpi_i-\ap_i\in \eus P(\varpi_i)$ 
and $\dim \sfr(\varpi_i)_{\varpi_i-\ap_i}=1$,
while  $\varpi_i-2\ap_i\not\in \eus P(\varpi_i)$.
We obviously have 
$\sfr(\varpi_i)^{U'}\supset \sfr(\varpi_i)_{\varpi_i}\oplus\sfr(\varpi_i)_{\varpi_i-\ap_i}$.
Any weight of $\sfr(\varpi_i)$ covered by $\varpi_i-\ap_i$ is of the form
$\varpi_i-\ap_i-\ap_j$, where $\ap_j$ is a simple root adjacent to $\ap_i$ in the
Dynkin diagram of $G$. Since $\varpi_i-\ap_j\not\in\eus P(\varpi_i)$,  
Kostant's weight multiplicity formula shows that
$\dim \sfr(\varpi_i)_{\varpi_i-\ap_i-\ap_j}=1$.
Since $\ap_i+\ap_j$ is a root of $U'$, we have
$\sfr(\varpi_i)_{\varpi_i-\ap_i-\ap_j}\not\subset \sfr(\varpi_i)^{U'}$ and  it follows from
Lemma~\ref{lm:connected}  that there cannot be anything else in $\sfr(\varpi_i)^{U'}$. 
\end{proof}

Set $\tilde X=\spe(\bbk[G]^U)$. It is an affine $G$-variety containing $G/U$ as a dense open subset.
Recall that $\tilde X$ has the following explicit model, see \cite{vp}.
Let $v_{-\varpi_i}$ be a lowest weight vector in $\sfr(\varpi_i)^*$. Then
the stabiliser of $(v_{-\varpi_1},\ldots,v_{-\varpi_r})\in \sfr(\varpi_1)^*\oplus\ldots\oplus\sfr(\varpi_r)^* $ is the maximal unipotent subgroup that is opposite to $U$ and
\[
\tilde X\simeq\ov{G{\cdot}
(v_{-\varpi_1},\ldots,v_{-\varpi_r})}\subset \sfr(\varpi_1)^*\oplus\ldots\oplus\sfr(\varpi_r)^* \ .
\]
Let $p_i: \tilde X\to  \sfr(\varpi_i)^*$ be the projection to the $i$-th component.
Then the pull-back of the linear functions on $ \sfr(\varpi_i)^*$ yields the unique copy
of the $G$-module $ \sfr(\varpi_i)$ in $\bbk[\tilde X]$. The additive decomposition 
$\bbk[\tilde X]= \bigoplus_{\lb\in\mathfrak X_+}\sfr(\lb)$ is a polygrading; i.e., 
if $f_i\in\sfr(\lb_i)\subset \bbk[\tilde X]$, $i=1,2$, then $f_1 f_2\in \sfr(\lb_1+\lb_2)$.

\begin{df}   \label{def:cyclic}
Let $Q$ be an algebraic group with Lie algebra $\q$. A $Q$-module $V$ 
is said to be {\it cyclic\/}  if there is $v\in V$ such that $\eus U(\q){\cdot}v=V$, where
$\eus U(\q)$ is the enveloping algebra of $\q$. Such $v$ is called a {\it cyclic vector}.
\end{df}

\begin{thm}  \label{thm:main1}
For any $\lb\in\mathfrak X_+$, we have 
\begin{itemize}
\item[(i)] \ $I_\lb=\{\lb-\sum_{i=1}^r a_i\ap_i \mid 0\le a_i\le (\lb,\ap_i^\vee)\}$;
\item[(ii)] \ $\sfr(\lb)^{U'}$ is a cyclic $U/U'$-module of
dimension $\prod_{i=1}^r((\lb,\ap_i^\vee)+1)$. Up to  a scalar multiple, there is a unique 
cyclic vector that is a $T$-eigenvector.
\end{itemize}
\end{thm}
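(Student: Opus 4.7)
The plan is to combine the upper bound coming from Proposition~\ref{prop:free} with a matching lower bound obtained by explicitly constructing $U'$-invariants in $\bbk[\tilde X]$, and then to exhibit a cyclic $T$-eigenvector by means of Lemma~\ref{lm:unique-max}.

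First I would build the invariants. By Proposition~\ref{fundam}, for every $i$ we have a basis $(f_i,\tilde f_i)$ of $\sfr(\varpi_i)^{U'}$ with $T$-weights $\varpi_i$ and $\varpi_i-\ap_i$ respectively. Set $k_i=(\lb,\ap_i^\vee)$, so that $\lb=\sum_i k_i\varpi_i$. For every vector $(a_1,\dots,a_r)$ with $0\le a_i\le k_i$ consider the monomial
\[
  F_{(a_1,\dots,a_r)} := \prod_{i=1}^r f_i^{\,k_i-a_i}\tilde f_i^{\,a_i}\ \in\ \bbk[\tilde X].
\]
Since $\bbk[\tilde X]$ is a domain and the $f_i,\tilde f_i$ are nonzero, $F_{(a_1,\dots,a_r)}\ne 0$. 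By the polygrading on $\bbk[\tilde X]$ recalled just before Definition~\ref{def:cyclic}, the product lies in $\sfr(\sum k_i\varpi_i)=\sfr(\lb)$, and its $T$-weight is $\lb-\sum_i a_i\ap_i$. It is also $U'$-invariant since each factor is. Thus $\sfr(\lb)^{U'}$ contains nonzero $T$-eigenvectors of weight $\lb-\sum a_i\ap_i$ for every admissible $(a_i)$, giving $\dim\sfr(\lb)^{U'}\ge\prod_i(k_i+1)$.

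Combining this with the reverse estimates from Proposition~\ref{prop:free} (multiplicity-freeness and $I_\lb\subset\{\lb-\sum a_i\ap_i\mid 0\le a_i\le k_i\}$) forces equality throughout: statement (i) holds and $\dim\sfr(\lb)^{U'}=\prod_i(k_i+1)$.

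For the cyclic assertion I would look at the weight $\mu_0=\lb-\sum_i k_i\ap_i$, which by (i) is an element of $I_\lb$, and pick a nonzero $v\in \sfr(\lb)^{U'}_{\mu_0}$. Since $U/U'$ is abelian with Lie algebra generated by the images of $e_1,\dots,e_r$, cyclicity of $v$ amounts to $\{\boldsymbol e^{\boldsymbol n}(v)\mid \boldsymbol n\in M_v\}$ spanning $\sfr(\lb)^{U'}$. By Lemma~\ref{lm:unique-max} the unique maximum $\boldsymbol m$ of $M_v$ satisfies $\mu_0+\sum m_i\ap_i=\lb$, so $m_i=k_i$ and $M_v$ is the full box $[0,k_1]\times\cdots\times[0,k_r]$. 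Hence the $\prod(k_i+1)$ vectors $\boldsymbol e^{\boldsymbol n}(v)$ are nonzero, of pairwise distinct $T$-weights, and span $\sfr(\lb)^{U'}$ by the dimension count, so $v$ is cyclic. Uniqueness of the cyclic $T$-eigenvector (up to scalar) follows from a weight count: if $w\in\sfr(\lb)^{U'}_\mu$ is cyclic with $\mu=\lb-\sum b_i\ap_i$, then the weights $\mu+\sum n_i\ap_i$ for $\boldsymbol n\in M_w$ must exhaust $I_\lb$; in particular the weight $\mu_0$ is reachable, which forces $b_i\ge k_i$ and hence $b_i=k_i$, so $\mu=\mu_0$ and uniqueness up to scalar follows from $\dim \sfr(\lb)^{U'}_{\mu_0}=1$.

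The main obstacle is the lower bound on $\dim\sfr(\lb)^{U'}$; everything else is a clean weight-theoretic consequence of Lemma~\ref{lm:unique-max} and Proposition~\ref{prop:free}. The trick that unlocks the lower bound is to work inside the polygraded algebra $\bbk[\tilde X]$, where the explicit fundamental generators of Proposition~\ref{fundam} can be freely multiplied and the only nontriviality needed is that $\bbk[\tilde X]$ is a domain.
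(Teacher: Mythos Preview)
Your proof is correct and follows essentially the same approach as the paper: both exploit the polygraded domain $\bbk[\tilde X]$ together with the fundamental $U'$-invariants $f_i,\tilde f_i$ from Proposition~\ref{fundam} to manufacture the needed $U'$-invariants inside $\sfr(\lb)$. The paper is slightly more economical, constructing only the single monomial $\prod_i \tilde f_i^{(\lb,\ap_i^\vee)}$ of weight $\lb-\sum_i(\lb,\ap_i^\vee)\ap_i$ and invoking Lemma~\ref{lm:unique-max} to see it is cyclic (which already yields (i) and the dimension), whereas you first construct all the monomials $F_{(a_1,\dots,a_r)}$ and then verify cyclicity; the content is the same.
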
\begin{proof}
In view of Lemma~\ref{lm:unique-max} and 
Proposition~\ref{prop:free}, it suffices to prove that $\sfr(\lb)^{U'}$ contains
a vector of weight $\lb-\sum_{i=1}^r(\lb,\ap_i^\vee)\ap_i$.
This vector have to be cyclic, because applying the $e_i$'s  to it we obtain
weight vectors with all weights from 
$\{\lb-\sum_{i=1}^r a_i\ap_i \mid 0\le a_i\le (\lb,\ap_i^\vee)\}$, hence 
the whole of  $\sfr(\lb)^{U'}$.

Let  $\tilde f_i$ be a nonzero vector in one-dimensional space
$\sfr(\varpi_i)_{\varpi_i-\ap_i}$. 
Using the unique copy of $\sfr(\varpi_i)$
inside $\bbk[\tilde X]$, we regard  $\tilde f_i$ as $U'$-invariant polynomial function on $\tilde X$.
Take the product (monomial) $F:=\prod_{i=1}^r \tilde f_i^{(\lb,\ap_i^\vee)}\in \bbk[\tilde X]$. 
Since $\bbk[\tilde X]$ is a domain, $F\ne 0$. 
The multiplicative  structure of
$\bbk[\tilde X]$ shows that $F\in \sfr(\lb)^{U'}$ and the weight of $F$ equals
$\sum_{i=1}^r(\lb,\ap_i^\vee)(\varpi_i-\ap_i)=\lb-\sum_{i=1}^r(\lb,\ap_i^\vee)\ap_i$.
\end{proof}
\begin{rmk}
For the group $TU'\subset B$, we have $\dim TU'=\dim U$.
It is well known that $TU'$ is a spherical subgroup of $G$
(e.g. apply \cite[Prop.\,1.1]{br87}). The sphericity also follows
from the fact $\sfr(\lb)^{U'}$ is a multiplicity free $T$-module
(Proposition~\ref{prop:free}).
That 
$\sfr(\lb)^{U'}$ is a multiplicity free $T$-module follows also from \cite[Corollary\,8]{ho89}. 
However, we obtain the explicit description of the corresponding weights and the 
$U/U'$-module structure of $\sfr(\lb)^{U'}$.
\end{rmk}

\begin{thm}   \label{thm:main2}
Let $f_i$ (resp. $\tilde f_i$) be a nonzero vector in one-dimensional space
$\sfr(\varpi_i)_{\varpi_i}$ (resp. $\sfr(\varpi_i)_{\varpi_i-\ap_i}$). 
Then the  algebra of $U'$-invariants, $\bbk[G/U]^{U'}$, is  freely generated by
$f_1,\tilde f_1,\dots,f_r,\tilde f_r$.
\end{thm}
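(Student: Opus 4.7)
The plan is to identify $\bbk[G/U]^{U'}$ with the polynomial ring $\bbk[x_1,y_1,\ldots,x_r,y_r]$ via the natural $\bbk$-algebra homomorphism $\Phi$ sending $x_i\mapsto f_i$, $y_i\mapsto \tilde f_i$. Both algebras carry a polygrading by $\mathfrak X_+$: the piece of $\bbk[G/U]$ of polydegree $\lb$ is $\sfr(\lb)$, while $x_i$ and $y_i$ are each assigned polydegree $\varpi_i$ so that $\Phi$ becomes a polygraded homomorphism. With this convention, the polydegree-$\lb$ piece of the source, for $\lb=\sum_i n_i\varpi_i$, has dimension equal to the number of tuples $(a_i,b_i)$ with $a_i+b_i=n_i$, namely $\prod_i(n_i+1)=\prod_i((\lb,\ap_i^\vee)+1)$, which matches $\dim\sfr(\lb)^{U'}$ by Theorem~\ref{thm:main1}(ii). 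So it suffices to prove that $\Phi$ is surjective in every polydegree.

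For surjectivity in polydegree $\lb=\sum_i n_i\varpi_i$, I would, for each $\boldsymbol a=(a_1,\dots,a_r)$ with $0\le a_i\le n_i$, consider the monomial
\[
   M_{\boldsymbol a}=\prod_{i=1}^r f_i^{a_i}\tilde f_i^{n_i-a_i}\in \bbk[\tilde X].
\]
The element $M_{\boldsymbol a}$ is a $U'$-invariant (as a product of $U'$-invariants), it lies in $\sfr(\lb)^{U'}$ by the polygraded structure of $\bbk[\tilde X]$, and it is nonzero because $\tilde X$ is irreducible and $\bbk[\tilde X]$ is therefore a domain. Its $T$-weight is $\lb-\sum_i (n_i-a_i)\ap_i$. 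Since the simple roots $\ap_1,\dots,\ap_r$ are linearly independent, these $\prod_i(n_i+1)$ weights are pairwise distinct, and they exhaust the weight set $I_\lb=\{\lb-\sum_i b_i\ap_i\mid 0\le b_i\le (\lb,\ap_i^\vee)\}$ of Theorem~\ref{thm:main1}(i). Combined with the one-dimensionality of each $T$-weight space in $\sfr(\lb)^{U'}$ from Proposition~\ref{prop:free}, this shows that $\{M_{\boldsymbol a}\}$ is a basis of $\sfr(\lb)^{U'}$, so $\Phi$ is surjective in polydegree $\lb$.

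Surjectivity in every polydegree together with the matching finite dimensions forces $\Phi$ to be an isomorphism of polygraded $\bbk$-algebras. This is precisely the assertion that $f_1,\tilde f_1,\dots,f_r,\tilde f_r$ are algebraically independent and freely generate $\bbk[G/U]^{U'}$. The only genuine content of the argument is exhibiting, for each admissible $T$-weight, a monomial in the $f_i,\tilde f_i$ that realises it; once this is in hand everything else is dimension-counting via Theorem~\ref{thm:main1} and Proposition~\ref{prop:free}, so I do not anticipate any real obstacle.
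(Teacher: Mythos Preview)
Your argument is correct and shares its core with the paper's proof: both rest on the observation that the monomials $\prod_i f_i^{a_i}\tilde f_i^{n_i-a_i}$ form a basis of $\sfr(\lb)^{U'}$ for each $\lb=\sum_i n_i\varpi_i$. You spell this out carefully via distinct $T$-weights and Proposition~\ref{prop:free}; the paper simply cites (the proof of) Theorem~\ref{thm:main1}.

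The one genuine difference is in how algebraic independence is obtained. You argue by dimension-matching: since $\Phi$ is surjective in every polydegree and both sides have the same finite dimension there, $\Phi$ is injective, hence the generators are free. The paper instead invokes a Krull-dimension count: because $U'$ is unipotent, $\dim(G/U)\md U'\ge \dim(G/U)-\dim U'=2r$, so $2r$ generators cannot satisfy a relation. Your route is more self-contained (it needs nothing beyond Theorem~\ref{thm:main1} and Proposition~\ref{prop:free}), while the paper's route is quicker once one is willing to use the standard fact about transcendence degree of unipotent invariants.
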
\begin{proof}
It follows from (the proof of) Theorem~\ref{thm:main1} that
the monomials $\prod_{i=1}^r f_i^{c_i}\tilde f_i^{(\lb,\ap_i^\vee)-c_i}$, $0\le c_i\le (\lb,\ap_i^\vee)$,
form a basis for $\dim \sfr(\lb)^{U'}$ for each $\lb\in\mathfrak X_+$. 
Hence $\bbk[G/U]^{U'}$ is generated by $f_1,\tilde f_1,\dots,f_r,\tilde f_r$.
Since $U'$ is unipotent and $\dim(G/U) -\dim U'=2r$, the Krull dimension of $\bbk[G/U]^{U'}$ 
is at least $2r$. Hence there is no relations between the above generators. 
\end{proof}

\noindent
Recall that a closed subgroup $H\subset G$ is said to be {\it epimorphic\/} 
if $\bbk[G/H]=\bbk$ or, equivalently, $\sfr(\lb)^H=\{0\}$ unless $\lb=0$, see 
e.g. \cite[\S\,23B]{gr}.

\begin{prop}   \label{prop:epi} 
Suppose $G$ is simple. The subgroup $TU'$ is  epimorphic 
if and only if $G\ne SL_2$ or $SL_3$.
\end{prop}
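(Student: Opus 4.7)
To prove the proposition, note that by definition $TU'$ is epimorphic iff $\sfr(\lb)^{TU'}=0$ for every nonzero $\lb\in\mathfrak X_+$. Since $T$ normalises $U'$, one has $\sfr(\lb)^{TU'}=(\sfr(\lb)^{U'})^T=\sfr(\lb)^{U'}_0$, and by Theorem~\ref{thm:main1}(i) this zero weight space is non-zero precisely when $0\in I_\lb$, i.e., when $\lb$ admits a decomposition $\lb=\sum_i a_i\ap_i$ with nonneg integers $a_j\le (\lb,\ap_j^\vee)$. Setting $m_{ij}:=-\langle\ap_i,\ap_j^\vee\rangle\in\BZ_{\ge 0}$ and expanding $(\lb,\ap_j^\vee)=2a_j-\sum_{i\ne j}m_{ij}a_i$, this amounts to finding a nonzero tuple $(a_1,\dots,a_r)\in\BN^r$ satisfying
\begin{equation*}
a_j\ge \sum_{i\ne j} m_{ij}\,a_i \qquad (j=1,\dots,r); \tag{$\star$}
\end{equation*}
dominance of $\lb$ is automatic from $(\star)$. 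So the proposition reduces to showing that $(\star)$ admits a nontrivial solution in $\BN^r$ if and only if $G\in\{SL_2,SL_3\}$.

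For $SL_2$, $(\star)$ is empty and $(a_1)=(1)$ solves it (with $\lb=\ap_1$); for $SL_3$, the two inequalities read $a_1\ge a_2$ and $a_2\ge a_1$, satisfied by $(1,1)$ with $\lb=\ap_1+\ap_2=\varpi_1+\varpi_2$. Hence $TU'$ is not epimorphic in these two cases.

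For any other simple type, the plan is to sum the inequalities in $(\star)$ to obtain $\sum_i(N_i-1)\,a_i\le 0$, where $N_i:=\sum_{j\ne i}m_{ij}$ is the weighted degree of vertex~$i$ in the Dynkin diagram of $G$. Since all $a_i\ge 0$, this forces $a_i=0$ whenever $N_i\ge 2$. An inspection of every simple Dynkin type beyond $A_1$ and $A_2$ shows that $N_i\ge 2$ at every non-endpoint vertex (for simply-laced types $A_r$ with $r\ge 3$, $D_r$, $E_{6,7,8}$ this is just $\deg(i)\ge 2$; in $B_r, C_r, F_4, G_2$ it is a direct check from the Cartan data, and a double bond only raises the interior $N_i$). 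Thus $a_i=0$ at every non-endpoint, and substituting this back into $(\star)$ at any non-endpoint adjacent to an endpoint kills the endpoint value too. The main obstacle is the small-rank non-simply-laced cases $B_2=C_2$ and $G_2$, where the diagram has only two vertices and the propagation argument has no interior node to start from; there one checks directly that $(\star)$ becomes $a_1\ge m_{21}a_2$ and $a_2\ge m_{12}a_1$ with $m_{12}m_{21}\in\{2,3\}>1$, forcing $(a_1,a_2)=(0,0)$.
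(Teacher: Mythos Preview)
Your argument is correct, and it is genuinely different from the paper's.

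The paper does not reformulate the problem as a system of integer inequalities. Instead, it pairs the weight $\mu=\lb-\sum_i c_i\ap_i$ (with $0\le c_i\le(\lb,\ap_i^\vee)$) against the single linear functional $\rho^\vee=\tfrac12\sum_{\gamma\in\Delta^+}\gamma^\vee$ and observes that
\[
(\mu,\rho^\vee)=\sum_i(\lb,\ap_i^\vee)(\varpi_i,\rho^\vee)-\sum_i c_i
\ \ge\ \sum_i(\lb,\ap_i^\vee)\bigl((\varpi_i,\rho^\vee)-1\bigr).
\]
The inequality $2(\varpi_i,\rho^\vee)\ge \#\{\gamma\in\Delta^+ : (\gamma,\varpi_i)>0\}$ bounds $(\varpi_i,\rho^\vee)$ below by half the dimension of the nilradical of the $i$-th maximal parabolic; this exceeds $1$ for every $i$ as soon as $G$ is simple and different from $SL_2,SL_3$, so $(\mu,\rho^\vee)>0$ and $\mu\ne 0$. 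In $SL_3$ the monomial $(\tilde f_1\tilde f_2)^a$ has weight zero.

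Your route---rewriting $0\in I_\lb$ as the system $a_j\ge\sum_{i\ne j}m_{ij}a_i$ and summing to get $\sum_i(N_i-1)a_i\le 0$---is a clean, purely combinatorial argument using only the Dynkin diagram and the fact that $N_i\ge 1$ at every vertex of a connected diagram of rank $\ge 2$. It avoids any appeal to structure theory of parabolics and makes the case analysis transparent. The paper's approach, on the other hand, produces an explicit separating hyperplane $\rho^\vee$ for the monoid $\Gamma$; this is not incidental, as the same functional is invoked again later (Lemma~\ref{lm:halfspace}) to show that $\Gamma\setminus\{0\}$ lies in an open half-space, which is the starting point for the Poincar\'e-series analysis in Section~\ref{sect:poinc}. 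So the paper's argument feeds directly into subsequent results, while yours stands on its own.
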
\begin{proof}
The case of $SL_2$ is obvious, so we assume that $r\ge 2$.
In view of Theorem~\ref{thm:main2}, we have to check that neither of the monomials
$\prod_{i=1}^r (f_i^{c_i}\tilde f_i^{(\lb,\ap_i^\vee)-c_i})$, $0\le c_i\le (\lb,\ap_i^\vee)$, has zero weight if  $G\ne SL_3$.
The weight in question equals
\[
    \mu:=\sum_{i=1}^r  (\lb,\ap_i^\vee)\varpi_i - \sum_{i=1}^r  c_i\ap_i \ .
\] 
Set $\rho^\vee=\frac{1}{2}\underset{\gamma\in\Delta^+}{\sum}\gamma^\vee$.
Then $(\mu,\rho^\vee)=\sum_{i=1}^r  (\lb,\ap_i^\vee)(\varpi_i,\rho^\vee)-\sum_{i=1}^r  c_i$.
Notice that  
\[
  2(\varpi_i,\rho^\vee)=\sum_{\gamma\in\Delta^+}(\varpi_i,\gamma^\vee)
  \ge \#\{ \gamma\in \Delta^+\mid (\gamma,\varpi_i)>0\} .
\]
That is, $2(\varpi_i,\rho^\vee)$ is at least the dimension of the nilpotent
radical of the maximal parabolic subalgebra corresponding to $\varpi_i$.
This readily implies that $(\varpi_i,\rho^\vee)>1$ for all $i$ whenever $\g\ne\sltri$.
Whence $(\mu,\rho^\vee)$ is positive. 
\\ \indent
For $SL_3$, the monomial $(\tilde f_1\tilde f_2)^a$ has zero weight. That is,
$\sfr(a(\varpi_1+\varpi_2))^{TU'}\ne \{0\}$.
\end{proof}

\begin{rmk}
If $G=SL_3$, then $TU'$ is a Borel subgroup of a reductive subgroup 
$GL_2\subset SL_3$. 
Proposition~\ref{prop:epi} can also be deduced from a result of Pommerening
\cite[Korollar\,3.6]{pomm}.
\end{rmk}
\begin{ex} 
Let $U_n$ be a maximal unipotent subgroup of $G=SL_{n}$ and let $U_{n-1}$ be 
a maximal unipotent subgroup of a
standardly embedded group $SL_{n-1}\subset SL_n$. It is well known that
$\bbk[SL_n/U_n]^{U_{n-1}}$ is a polynomial algebra of Krull dimension $2(n-1)$ and its generators have a simple description,
see e.g. \cite[Sect.\,3]{ap}. The reason is that $SL_n/U_n$ is a spherical
$SL_{n-1}$-variety and the branching rule $SL_n\downarrow SL_{n-1}$ is rather simple.
That is, $\bbk[SL_n/U_n]^{U_{n-1}}$ and $\bbk[SL_n/U_n]^{U_{n}'}$ are polynomial rings of the
same dimension, and also
$\dim U_{n-1}=\dim U'_n$. However, the subgroups $U'_n,\,U_{n-1}\subset SL_n$
are essentially different unless $n=2,\,3$.
\end{ex}

\section{Some properties of algebras of $U'$-invariants} 
\label{sect:gen_prop}

\noindent
The main result of Section~\ref{sect:inv} says that $\bbk[G/U]^{U'}$ is a polynomial
algebra of Krull dimension $2r$. This can also be understood in the other way around,
since $\bbk[G/U]^{U'}$ and $\bbk[G/U']^{U}$ are canonically isomorphic. 
Indeed, for any closed subgroup $H\subset G$, we regard $\bbk[G/H]$  as
subalgebra of $\bbk[G]$:
\[
\bbk[G/H]=\{f \in \bbk[G] \mid f(gh)=f(g) \textrm{ for any } g\in G, h\in H\}.
\]
Any  subgroup of $G$ acts on $G/H$ by left translations. Therefore
\begin{gather*}
\bbk[G/U]^{U'} \simeq \{ f\in \bbk[G] \mid  f(u_1g u_2)=f(g)  \textrm{ for any } g\in G, u_1\in U', u_2\in U\}, 
\\
\bbk[G/U']^{U} \simeq \{ f\in \bbk[G] \mid  f(u_2g u_1)=f(g)  \textrm{ for any } g\in G, u_1\in U', u_2\in U\}.
\end{gather*}
The involutory mapping $(f\in \bbk[G])\mapsto \hat f$, where $\hat f(g)=f(g^{-1})$, takes $\bbk[G/U]^{U'}$ to 
$\bbk[G/U']^{U}$, and vice versa.

One can deduce some properties of $\bbk[G/U']$ using the known structure of
$\bbk[G/U']^{U}$. Set $\ca=\bbk[G/U']$. It is a rational $G$-algebra, which can be decomposed as $G$-module:
\[
       \ca=\bigoplus_{\lb\in \mathfrak X_+} m_{\lb,\ca}\sfr(\lb) .
\]
By Frobenius reciprocity, the multiplicity $m_{\lb,\ca}$ is equal to
$\dim \sfr(\lb^*)^{U'}$. Therefore, it is finite. In our situation,
\[
   \dim \sfr(\lb^*)^{U'}=\dim \sfr(\lb)^{U'}=\prod_{i=1}^r((\lb,\ap_i^\vee)+1) .
\]
In particular, $m_{\varpi_i,\ca}=2$ for any $i$.  One can also argue as follows.

\noindent
The group $G\times G$ acts on $G$ by left and right translations and
the decomposition of $\bbk[G]$ as $G\times G$-module is of the form:
\[
     \bbk[G]= \bigoplus_{\lb\in\mathfrak X_+} \sfr(\lb^*)\otimes \sfr(\lb) \ ,
\]
where the first (resp. second) copy of $G$ in $G\times G$ acts on the first 
(resp. second) factor of tensor product in each summand
\cite[Ch.\,2, \S\,3, Theorem\,3]{kraft}.
Then
\begin{gather}   \label{eq:ca}
    \ca=\bbk[G/U']= \bigoplus_{\lb\in\mathfrak X_+} \sfr(\lb^*)\otimes \sfr(\lb)^{U'},
\\
\label{eq:ca-u}
    \ca^U= \bigoplus_{\lb\in\mathfrak X_+} \sfr(\lb^*)^U\otimes \sfr(\lb)^{U'} .
\end{gather}
In this context, Theorem~\ref{thm:main2} asserts that any basis of the $2r$-dimensional 
vector space \ $
\bigoplus_{i=1}^r\, \sfr(\varpi_i^*)^U\otimes \sfr(\varpi_i)^{U'}$ 
freely generates the polynomial algebra $\ca^U$. It is known that $\bbk[G/U']$ is finitely generated (see \cite[Theorem\,7]{gross86}). Below, we obtain a more precise assertion.

\begin{lm}    \label{lm:fund-cop}
$\ca$ is generated by the copies of fundamental $G$-modules, i.e.,
by the subspace $
\bigoplus_{i=1}^r\, \sfr(\varpi_i^*)\otimes \sfr(\varpi_i)^{U'}$.
\end{lm}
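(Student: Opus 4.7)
The plan is to exploit the fact that $\ca$ is a rational $G$-algebra in combination with the already-proved description of $\ca^U = \bbk[G/U']^U \simeq \bbk[G/U]^{U'}$ from Theorem~\ref{thm:main2}. Let $\cb \subset \ca$ be the $\bbk$-subalgebra generated by the subspace
\[
  W := \bigoplus_{i=1}^r \sfr(\varpi_i^*)\otimes \sfr(\varpi_i)^{U'} .
\]
Since $W$ is $G$-stable (as a direct sum of isotypic pieces of $\ca$), the subalgebra $\cb$ is a $G$-stable subalgebra of $\ca$. The goal is to prove $\cb=\ca$.

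First I would observe that the generators $f_i,\tilde f_i\in\sfr(\varpi_i)^{U'}\subset\bbk[\tilde X]$ from Theorem~\ref{thm:main2} all lie in $W\subset\cb$ (using the identifications in \eqref{eq:ca} and \eqref{eq:ca-u}: $\sfr(\varpi_i^*)^U\otimes\sfr(\varpi_i)^{U'}$ is the image of $\sfr(\varpi_i)^{U'}$ under the left-$U$-invariant embedding). By Theorem~\ref{thm:main2}, these elements freely generate $\ca^U=\bbk[G/U']^U$. Hence $\ca^U\subset\cb\cap\ca^U=\cb^U$, and of course $\cb^U\subset\ca^U$, so
\[
  \cb^U=\ca^U .
\]

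Next I would pass to the isotypic decomposition. Decomposing as $G$-modules,
\[
  \cb=\bigoplus_{\lb\in\mathfrak X_+} \sfr(\lb^*)\otimes V_\lb^{\cb}, \qquad
  \ca=\bigoplus_{\lb\in\mathfrak X_+} \sfr(\lb^*)\otimes V_\lb^{\ca},
\]
with $V_\lb^{\cb}\subset V_\lb^{\ca}$ the multiplicity spaces; the inclusion $\cb\subset\ca$ is compatible with the isotypic decomposition because both algebras are semisimple $G$-modules. Taking $U$-invariants and using $\dim \sfr(\lb^*)^U=1$ gives $\cb^U=\bigoplus_\lb V_\lb^{\cb}$ and $\ca^U=\bigoplus_\lb V_\lb^{\ca}$. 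The equality $\cb^U=\ca^U$ together with $V_\lb^{\cb}\subset V_\lb^{\ca}$ forces $V_\lb^{\cb}=V_\lb^{\ca}$ for every $\lb$, and hence $\cb=\ca$.

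The only conceptual point requiring care is the identification of the generators in Theorem~\ref{thm:main2} with elements of $W$ under the canonical isomorphism $\bbk[G/U]^{U'}\simeq\bbk[G/U']^U$ (the $f\mapsto\hat f$ map described right before Lemma~\ref{lm:fund-cop}); once this bookkeeping is settled, the reduction of a $G$-submodule identity to an identity of $U$-invariants is standard from the complete reducibility of rational $G$-modules and will be the routine part of the argument.
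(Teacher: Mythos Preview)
Your proposal is correct and follows essentially the same route as the paper: both arguments show that the $G$-stable subalgebra $\cb$ generated by the fundamental pieces contains all of $\ca^U$ (via Theorem~\ref{thm:main2} and Eq.~\eqref{eq:ca-u}), and then conclude $\cb=\ca$ from complete reducibility. The paper phrases the final step as ``$\cb$ contains the highest vectors of all simple $G$-modules inside $\ca$'', while you unpack the same fact through multiplicity spaces and the equality $\cb^U=\ca^U$; these are equivalent formulations of the standard principle that a $G$-stable submodule is determined by its $U$-invariants.
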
\begin{proof}
We know that $\ca^U=\bbk[G/U']^U$ is a polynomial algebra, generated by $2r$ functions.
Using Equations~\eqref{eq:ca} and \eqref{eq:ca-u}, 
one sees that the generators of $\ca^U$ are just the highest vectors of all fundamental
$G$-module sitting in $\ca$.
It follows that the subalgebra of $\ca$ generated by all fundamental $G$-modules is
$G$-stable and contains  the highest vectors of all simple $G$-modules inside $\ca$. Hence
it is equal to $\ca$.
\end{proof}

For a quasi-affine $G/H$,  it is known that  $\bbk[G/H]$ is finitely generated
if and only if there is a  $G$-equivariant embedding $i: G/H \to  V$, where
$V$ is a finite-dimensional $G$-module, such that the boundary of $i(G/H)$ is of
codimension $\ge 2$ \cite[\S\,4]{gr}. 
As $U'$ is unipotent, $G/U'$ is quasi-affine. Hence such an embedding of $G/U'$ exists
and, making use of Lemma~\ref{lm:fund-cop}, we explicitly construct it.

Recall that $f_i$ and $\tilde f_i$ are nonzero weight vectors in $\sfr(\varpi_i)_{\varpi_i}$
and $\sfr(\varpi_i)_{\varpi_i-\ap_i}$, respectively. 

\begin{thm}    \label{thm:model}
Let $p=(f_1,\tilde f_1,\dots,f_r,\tilde f_r)\in 2\sfr(\varpi_1)\oplus\ldots\oplus 2\sfr(\varpi_r)$.
Then   
\begin{itemize}
\item[\sf (i)] \    $G_p=U'$; 
\item[\sf (ii)] \   $\bbk[\ov{G{\cdot}p}]=\bbk[G/U']$ and $\ov{G{\cdot}p}\simeq \spe(\ca)$ is normal;
\item[\sf (iii)] \   $\codim(\ov{G{\cdot}p}\setminus {G{\cdot}p})\ge 2$.
\end{itemize}
\end{thm}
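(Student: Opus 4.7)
Claim (i) is the technical core; once it is established, (ii) follows from Lemma~\ref{lm:fund-cop} and (iii) from a standard normality argument. The inclusion $U'\subseteq G_p$ is immediate from $f_i,\tilde f_i\in\sfr(\varpi_i)^{U'}$. For the reverse, I would first reduce to $G_p\subseteq U$: the joint $G$-stabilizer of the tuple of highest weight vectors $(f_1,\ldots,f_r)\in\bigoplus_i\sfr(\varpi_i)$ is $U$, since the $\varpi_i$ form a $\BZ$-basis of $\mathfrak X$ (by simple-connectedness of $G$), so the only element of $T$ fixing all $f_i$ is $1$, while $U$ obviously fixes each $f_i$. Inside $\Lie(U)/\Lie(U')$, represent a class by $v=\sum_j c_j e_{\ap_j}$; then $v\cdot\tilde f_i=c_i e_{\ap_i}\tilde f_i$, because $\varpi_i-\ap_i+\ap_j\notin\eus P(\varpi_i)$ for $j\ne i$ (so $e_{\ap_j}\tilde f_i=0$), whereas $e_{\ap_i}\tilde f_i$ is a nonzero multiple of $f_i$ by the maximal-element argument of Lemma~\ref{lm:unique-max}. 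Consequently $\Lie(U_{\tilde f_i})=\Lie(U')\oplus\bigoplus_{j\ne i}\g_{\ap_j}$, and intersecting over $i$ gives $\Lie(G_p)=\Lie(U')$. Since $G_p\subseteq U$ is unipotent in characteristic zero, it is connected, so $G_p=U'$.

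For (ii), the orbit map identifies $G\cdot p\simeq G/U'$ as a locally closed subvariety of $V:=2\sfr(\varpi_1)\oplus\cdots\oplus 2\sfr(\varpi_r)$, and restriction of polynomial functions gives a $G$-equivariant homomorphism $\varphi^*\colon\bbk[V]\to\bbk[G\cdot p]=\ca$. On $V^*=\bigoplus_i 2\sfr(\varpi_i^*)$ the map is injective: the two copies of $\sfr(\varpi_i^*)$ in $V^*$ land in the $\sfr(\varpi_i^*)$-isotypic component $\sfr(\varpi_i^*)\otimes\sfr(\varpi_i)^{U'}$ of $\ca$ described by~\eqref{eq:ca}, matching the linearly independent vectors $f_i,\tilde f_i\in\sfr(\varpi_i)^{U'}$. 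By Lemma~\ref{lm:fund-cop} this image generates $\ca$ as an algebra, so $\varphi^*$ is surjective and $\bbk[\ov{G\cdot p}]=\ca=\bbk[G/U']$. Normality of $\spe(\ca)$ is automatic: $G/U'$ is smooth, so $\bbk[G/U']$ is integrally closed in $\bbk(G/U')$.

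For (iii), set $Y=\ov{G\cdot p}$; by (ii), $Y$ is normal with $\bbk[Y]=\bbk[G/U']$. If a prime divisor $D\subseteq Y\setminus G\cdot p$ existed, the inclusion $G\cdot p\subseteq Y\setminus D$ would force $\bbk[Y\setminus D]\subseteq\bbk[G\cdot p]=\bbk[Y]$, contradicting the general strict inclusion $\bbk[Y]\subsetneq\bbk[Y\setminus D]$ valid for every prime divisor of a normal affine variety. I would prove this strict inclusion by picking $a\in\p_D\setminus\{0\}$, decomposing $V(a)=D\cup E_1\cup\cdots\cup E_s$, and via prime avoidance choosing $b_0\in\bigcap_i\p_{E_i}\setminus\p_D$; for $m$ large enough the rational function $b_0^m/a$ has poles only on $D$ and therefore belongs to $\bbk[Y\setminus D]\setminus\bbk[Y]$. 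The only real subtlety is here, since $\ca$ need not be factorial, so one cannot simply invert a local equation of $D$; the prime-avoidance step is what compensates.
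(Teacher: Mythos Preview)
Your argument is correct and tracks the paper's line exactly: the paper declares (i) ``obvious'', derives (ii) from Lemma~\ref{lm:fund-cop} via the multiplicity count $m_{\varpi_i,\cb}=2=m_{\varpi_i,\ca}$, and then says this ``yields the rest'' for (iii)---you have simply unpacked the details the paper suppresses. Two minor remarks: your justification of $G_{(f_1,\dots,f_r)}=U$ only treats elements of $B=TU$; to conclude one should first note that the joint stabilizer of the highest-weight \emph{lines} is $\bigcap_i P_{\varpi_i}=B$ (this is precisely the Vinberg--Popov fact recalled in Section~\ref{sect:inv}), and your concern about factoriality of $\ca$ is unfounded, since $G$ simply connected and $U'$ characterless make $\bbk[G]^{U'}$ factorial---though your prime-avoidance workaround is valid regardless.
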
\begin{proof}
Part (i) is obvious. Then $G{\cdot}p\simeq G/U'$ and hence $\cb:=\bbk[\ov{G{\cdot}p}]$ is a
subalgebra of $\ca$. By the very construction, $m_{\varpi_i,\cb}\ge 2$. (Consider different
non-trivial projections $\ov{G{\cdot}p} \to \sfr(\varpi_i)$ for all $i$.)  Since $m_{\varpi_i,\cb}\le 
m_{\varpi_i,\ca}=2$ and $\ca$ is generated by the fundamental $G$-modules, we must have
$\cb=\ca$. This yields the rest. 
\end{proof}

\noindent
Let $X$ be 
an algebraic variety equipped with a regular action of $G$. Then $X$ is said to be
a $G$-variety. 
The  ``transfer principle'' 
(\cite[Ch.\,1]{br-these}, \cite[\S\,3]{po86}, \cite[\S\,9]{gr}) asserts that 
\[
   \bbk[X]^{H}\simeq (\bbk[X]\otimes \bbk[G/H])^G   
\]
for any affine $G$-variety $X$ and any subgroup $H\subset G$.
In particular, if $\bbk[G/H]$ is finitely generated, then so is $\bbk[X]^H$.
In view of Lemma~\ref{lm:fund-cop}, this applies to $H=U'$, hence $\bbk[X]^{U'}$ is always
finitely-generated.
Moreover, the polynomiality of $\bbk[G/U']^{U}$ implies that
$\bbk[X]^{U'}$  inherits a number of other good properties from $\bbk[X]$.
Recall that  $\spe(\bbk[X]^{U'})$ is denoted by $X\md U'$; hence $\bbk[X\md U']$ and
$\bbk[X]^{U'}$ are the same objects. 

We often use below the notion of a {\it variety with  rational singularities}.
Let us provide some relevant information for the affine case.
 
a) If  $\phi:\tilde X \to X$ is a resolution of singularities, then $X$ is said to have 
rational singularities if  $H^{0}(\tilde X,\co_{\tilde X})=\bbk[X]$ and
$H^{i}(\tilde X,\co_{\tilde X})=0$ for  $i\ge 1$. In particular, $X$ is necessarily normal.

b) If $X$ has only rational singularities and $G$ is a reductive group acting on $X$, then
$X\md G$ has only rational singularities (Boutot \cite{boutot}).

c) If $X$ has only rational singularities, then $X$ is  Cohen-Macaulay
(Kempf \cite{ke73}). It follows that
 if $X$ is factorial and has rational singularities, then $X$ is Gorenstein. 

\begin{thm}    \label{thm:RS}
Let $X$ be an  irreducible affine $G$-variety. If $X$ has only rational 
singularities, then so has $X\md U'$. Furthermore, if $X$ is factorial, then
$X\md U'$ is factorial, too.
\end{thm}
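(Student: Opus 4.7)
The plan is to apply the transfer principle
\[
\bbk[X]^{U'}\simeq (\bbk[X]\otimes\bbk[G/U'])^G,
\]
which presents $X\md U'$ as the reductive GIT quotient $(X\times Y)\md G$, where $Y:=\spe(\ca)$ is the equivariant affine embedding of $G/U'$ produced in Theorem~\ref{thm:model}.

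For the rational-singularities claim I would argue in three steps. Boutot's theorem (item (b) recalled before the statement) reduces the question to showing that $X\times Y$ has rational singularities; since rational singularities are preserved under products in characteristic zero and $X$ has them by hypothesis, it suffices to show that $Y$ itself has rational singularities. For this, the key inputs are that $Y$ is normal (Theorem~\ref{thm:model}(ii)), that the boundary $Y\setminus G/U'$ has codimension $\ge2$ (Theorem~\ref{thm:model}(iii)), and, most importantly, that the $U$-invariant subalgebra $\ca^U=\bbk[f_1,\tilde f_1,\ldots,f_r,\tilde f_r]$ is polynomial (Theorem~\ref{thm:main2}). I would then combine these facts with the theorem of Knop relating the canonical module of a rational $G$-algebra to that of its $U$-invariants (the same input used elsewhere in the paper) in order to lift rational singularities from the polynomial ring $\ca^U$ to $\ca$ itself.

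For the factoriality claim I would appeal to Popov's criterion: if $H$ is a connected algebraic group with trivial character group acting on an irreducible factorial affine variety on which all invertible functions are $H$-semi-invariant, then any finitely generated subalgebra of $H$-invariants is factorial. Applied with $H=U'$, the character group is trivial since $U'$ is unipotent; every unit of $\bbk[X]$ is automatically $G$-semi-invariant and, as $G$ is semisimple and hence has trivial character group, in fact $G$-invariant and a fortiori $U'$-invariant; finite generation of $\bbk[X]^{U'}$ has already been established via the transfer principle together with Lemma~\ref{lm:fund-cop}. Popov's criterion then gives factoriality of $\bbk[X\md U']=\bbk[X]^{U'}$. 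The main obstacle I anticipate is the descent step establishing rational singularities for $Y=\spe(\ca)$; once this is in hand, the passage to $X\md U'$ is a formal consequence of Boutot's theorem, and the factoriality half of the statement is independent and follows directly from Popov's criterion.
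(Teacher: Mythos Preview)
Your overall architecture is exactly that of the paper: transfer principle to write
$X\md U'\simeq\bigl(X\times(G\md U')\bigr)\md G$, then Boutot's theorem once you know
$G\md U'$ has rational singularities; and the factoriality half via the triviality of the
character group of $U'$ (your appeal to Popov's criterion is a more explicit phrasing of
what the paper compresses into one sentence).

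The genuine gap is the step you flag yourself: establishing rational singularities for
$Y=\spe(\ca)=G\md U'$. You propose to ``lift rational singularities from the polynomial
ring $\ca^U$ to $\ca$'' using Knop's comparison of canonical modules. That does not
work as stated. Knop's result \cite{kn86} gives an injective graded map
$\Omega_Z\hookrightarrow \wedge^m\g^*\otimes\pi_G^*(\Omega_{Z\md G})$; it compares
dualizing data and is exactly the right tool for the degree inequalities of
Section~\ref{sect:poinc}, but it says nothing about the vanishing of higher derived
pushforwards along a resolution, which is what ``rational singularities'' means. No
amount of information about $\Omega_Y$ alone will produce $R^i\phi_*\co_{\tilde Y}=0$.
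The paper closes this step with a different, black-box input: Kraft's theorem
(\cite[Theorem\,1.6]{br-these}, \cite{po86}), which says precisely that if the
$U$-invariant algebra of a rational $G$-algebra is polynomial (more generally, has
rational singularities), then the algebra itself has rational singularities. Since
$\ca^U=\bbk[G/U']^U$ is polynomial by Theorem~\ref{thm:main2}, Kraft's theorem gives
rational singularities for $G\md U'$ in one stroke; the normality and
codimension-$\ge 2$ facts from Theorem~\ref{thm:model} are not needed for this step.
Once you substitute Kraft's theorem for the Knop argument, your proof coincides with
the paper's.
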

\begin{proof}
This is a straightforward consequence of known technique. 
Since $\bbk[G/U']^U$ is a polynomial algebra, $G\md U'$ has rational singularities
by Kraft's theorem \cite[Theorem\,1.6]{br-these}, \cite{po86}.
By the transfer principle for $H=U'$, we have
$X\md U'\simeq \bigl(X\times (G\md U')\bigr)\md G$.
Applying Boutot's theorem \cite{boutot} to the right-hand side, we conclude that 
$X\md U'$ has rational singularities. The second assertion stems from the fact that $U'$ has
no non-trivial rational characters. 
\end{proof}

We have $\bbk[X]^{U}\subset \bbk[X]^{U'}$, and both algebras are finitely generated.
Assuming that generators of $\bbk[X]^{U}$ are known,
we obtain a finite set  of generators for $\bbk[X]^{U'}$, as follows.

\begin{thm}    \label{thm:base-X-U'}
Suppose that  $f_1,\dots,f_m$ is a set of $T$-homogeneous generators of  
$\bbk[X]^{U}$ and the weight of $f_i$ is $\lb_i$. (That is, there is a $G$-submodule
$\BV_i\subset \bbk[X]$ such that $\BV_i\simeq \sfr(\lb_i)$ and $f\in (\BV_i)^{U}$.)
Then the union of bases of the spaces\/ $(\BV_i)^{U'}$, $i=1,\dots,m$,  generate $\bbk[X]^{U'}$.
In particular, $\bbk[X]^{U'}$ is generated by at most 
$\sum_{i=1}^m\prod_{j=1}^r \bigl((\lb_i,\ap_j^\vee)+1\bigr)$ functions.
\end{thm}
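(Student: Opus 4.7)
The bound $\sum_i\prod_j((\lb_i,\ap_j^\vee)+1)$ is immediate from Theorem~\ref{thm:main1}(ii), which computes $\dim\BV_i^{U'}$. For the generation claim, let $A$ denote the subalgebra of $\bbk[X]^{U'}$ generated by $\bigcup_i\BV_i^{U'}$. Since each $f_i\in\BV_i^U\subset\BV_i^{U'}$ belongs to $A$, one has $A\supset\bbk[f_1,\dots,f_m]=\bbk[X]^U$. The plan is to show $A=\bbk[X]^{U'}$ by strong induction on a dominant weight $\lb$ in the root order, the claim at step $\lb$ being that every element of the $G$-isotypic component $E_\lb^{U'}:=\bigoplus_{\BW\simeq\sfr(\lb)\subset\bbk[X]}\BW^{U'}$ lies in $A$. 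The base $\lb=0$ is immediate since $E_0^{U'}=\bbk[X]^G\subset\bbk[X]^U\subset A$.

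For the inductive step, by linearity I may reduce to $v_\lb\in E_\lb^{U'}$ a $T$-weight vector of weight $\mu=\lb-\sum_ja_j\ap_j$, and Proposition~\ref{prop:free} applied copy-by-copy forces $0\le a_j\le(\lb,\ap_j^\vee)$. Then $u:=\boldsymbol e^{\boldsymbol a}(v_\lb)\in\bbk[X]^U_\lb$ admits an expansion $u=\sum_\alpha c_\alpha\prod_i f_i^{\alpha_i}$ in which each monomial has weight $\lb$, so $\sum_i\alpha_i(\lb_i,\ap_j^\vee)=(\lb,\ap_j^\vee)\ge a_j$ for every $j$. The core of the construction is to build a weight-$\mu$ lift of $u$ inside $A$: for each monomial, distribute the deficit $\boldsymbol a$ among the $\sum_i\alpha_i$ factors by choosing integers $n_{i,k,j}^{(\alpha)}$ ($1\le k\le\alpha_i$, $1\le j\le r$) with $0\le n_{i,k,j}^{(\alpha)}\le(\lb_i,\ap_j^\vee)$ and $\sum_{i,k}n_{i,k,j}^{(\alpha)}=a_j$, then replace the $k$-th copy of $f_i$ by the unique weight vector $g_{i,k}^{(\alpha)}\in\BV_i^{U'}$ of weight $\lb_i-\sum_jn_{i,k,j}^{(\alpha)}\ap_j$, normalised so that $\boldsymbol e^{\boldsymbol n_{i,k}^{(\alpha)}}(g_{i,k}^{(\alpha)})=f_i$. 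This yields $w=\sum_\alpha\binom{\boldsymbol a}{\boldsymbol n^{(\alpha)}}^{-1}c_\alpha\prod_{i,k}g_{i,k}^{(\alpha)}\in A$ of weight $\mu$.

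The critical point is that $\boldsymbol e^{\boldsymbol a}(w)=u$ holds exactly, without cross-terms. Expanding $\boldsymbol e^{\boldsymbol a}$ on the product by the multinomial Leibniz rule, a distribution $\{\boldsymbol p_{i,k}\}$ of $\boldsymbol a$ contributes only when each $\boldsymbol e^{\boldsymbol p_{i,k}}(g_{i,k}^{(\alpha)})$ remains in $\BV_i^{U'}$ (whose $T$-weights are $\lb_i-\sum_jb_j\ap_j$ with $0\le b_j\le(\lb_i,\ap_j^\vee)$), forcing $\boldsymbol p_{i,k}\preceq\boldsymbol n_{i,k}^{(\alpha)}$ coordinate-wise; combined with $\sum_{i,k}\boldsymbol p_{i,k}=\boldsymbol a=\sum_{i,k}\boldsymbol n_{i,k}^{(\alpha)}$ this forces equality throughout, leaving exactly one surviving term. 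To finish, decompose $w=\sum_\nu w_\nu$ into $G$-isotypics: since each $w^{(\alpha)}$ is a product of irreducible $G$-modules $\BV_{i_1},\dots,\BV_{i_s}$ whose Cartan top has highest weight $\sum_k\lb_{i_k}=\lb$, the support of $w$ is contained in $\{\nu\text{ dominant}:\nu\le\lb\}$. Each $w_\nu$ with $\nu<\lb$ lies in $E_\nu^{U'}$ and is therefore in $A$ by the inductive hypothesis, while comparing isotypics in the identity $\boldsymbol e^{\boldsymbol a}(w)=u$ and invoking the isomorphism $\boldsymbol e^{\boldsymbol a}\colon\BW^{U'}_\mu\isom\BW^U$ (Proposition~\ref{prop:free}) for each copy $\BW\simeq\sfr(\lb)$ forces $w_\lb=v_\lb$. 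Thus $v_\lb=w-\sum_{\nu<\lb}w_\nu\in A$, closing the induction. The main obstacle will be the combinatorial Leibniz computation that nails down a unique surviving distribution; the subsequent isotypic bookkeeping and the inductive closure are then formal.
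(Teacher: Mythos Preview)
Your argument is correct and follows the same skeleton as the paper's proof: induction on the root order of $\lb$, construction of an element of $A$ as a product of vectors from the $\BV_i^{U'}$, and passage to the $\lb$-isotypic component with the lower pieces absorbed by the inductive hypothesis. Two differences are worth recording. First, the paper observes that the subalgebra is $B/U'$-stable and therefore only proves that the \emph{cyclic} vector of each copy $\BW\simeq\sfr(\lb)$ lies in it (taking $\mu=\lb-\sum_j(\lb,\ap_j^\vee)\ap_j$ and the single distribution $n_{i,k,j}=(\lb_i,\ap_j^\vee)$); you instead treat every weight $\mu\in I_\lb$ directly via an arbitrary admissible distribution, which is a bit more work but avoids invoking stability. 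Second, your Leibniz computation with the explicit multinomial normaliser $\binom{\boldsymbol a}{\boldsymbol n^{(\alpha)}}^{-1}$ is more careful than the paper's bare assertion $E(\lb)P(c_1,\dots,c_m)=P(E(\lb_1)c_1,\dots,E(\lb_m)c_m)$: that identity is only true monomial-by-monomial up to a nonzero scalar depending on the monomial, and your renormalisation is exactly what is needed to guarantee that the $\lb$-isotypic part of $w$ hits the prescribed $v_\lb$ rather than some other element of $E_\lb^{U'}$.
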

\begin{proof}
Let $\cb$ be the algebra generated by the spaces $(\BV_i)^{U'}$.
Clearly, $\cb$ is $B/U'$-stable and contains
$\bbk[X]^U$. Hence it meets every simple $G$-submodule of $\bbk[X]$.
Therefore, it is sufficient to prove that $\cb$ contains $U/U'$-cyclic vectors of all simple
$G$-submodules.

We argue by induction on the root order `$\curle$' on the set of dominant weights.
Let $c_i\in (\BV_i)^{U'}$ be the unique $U/U'$-cyclic weight vector. By definition, $c_i\in \cb$.
We normalise $f_i$ and $c_i$ such that 
$E(\lb_i)(c_i)=f_i$, where the operator $E(\lb)$, $\lb\in\mathfrak X_+$, is defined by
$E(\lb):=\prod_{i=1}^r e_i^{(\lb,\ap_i^\vee)}$.
Assume that for any simple $G$-module $\BW$ of type $\sfr(\mu)$ occurring in $\bbk[X]$, with 
$\mu \prec \lb$, the cyclic vector of $\BW$ belong to $\cb$.
Consider  an arbitrary simple submodule $\BV\subset \bbk[X]$ of type $\sfr(\lb)$.
Take a polynomial $P$ in $m$ variables such that $f{=}P(f_1,\dots,f_m)$  is a highest vector
of $\BV$.
Without loss of generality, we may assume that every monomial of $P$ is of  weight
$\lb$. We claim that $P(c_1,\dots, c_m)\ne 0$. Indeed, it is easily seen
that $E(\lb)P(c_1,\dots, c_m)=P\bigl(E(\lb_1)(c_1),\dots, E(\lb_m)(c_m)\bigr)=f$.
The last equality does not guarantee us that $P(c_1,\dots, c_m)\in \BV$. However, 
this means that the projection of this element to $\BV$ is well-defined and it must be
a $U/U'$-cyclic vector of $\BV$, say $c$.
More precisely, $P(c_1,\dots, c_m)=c+\tilde c$, where $\tilde c$ belong to a sum of
simple submodules  of types $\sfr(\nu_i)$ with $\nu_i\prec \lb$.
If $P$ is a monomial, then this follows from the uniqueness of the Cartan component
in tensor products. In our case, the Cartan component of the tensor product associated with
every monomial of $P$ is $\sfr(\lb)$, 
which easily yields the general assertion.
By  definition, $P(c_1,\dots, c_m)\in\cb$, and  by the induction 
assumption, $\tilde c\in \cb$. Thus, $c\in \cb$.
\end{proof}

This theorem provides a good upper bound on the number of generators of $\bbk[X]^{U'}$.
However, it is not always the case that a minimal generating system of $\bbk[X]^U$  is
a part of a minimal generating system of $\bbk[X]^{U'}$. (See examples in Section~\ref{sect:classif}.)

Since $U'$ has no rational characters, $\dim X\md U'=\trdeg \bbk(X)^{U'}=
\dim X-\dim U'+ \min_{x\in X}\dim (U')_x$. To compute the last quantity, we use the existence of
a generic stabiliser for $U$-actions on irreducible $G$-varieties~\cite[Thm.\,1.6]{BLV}.

\begin{lm}   \label{prop:min-stab-U'}
Let $U_\star$ be a generic stabiliser for $(U:X)$. Then 
$\min_{x\in X}\dim (U')_x=\dim (U_\star\cap U')$.
\end{lm}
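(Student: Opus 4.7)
The plan is to combine two ingredients: the existence of a generic $U$-stabiliser (the cited theorem of Brion--Luna--Vust) and the normality of $U'$ in $U$.

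First, I would invoke \cite[Thm.\,1.6]{BLV} to obtain a dense open subset $X_0\subset X$ such that for every $x\in X_0$ the stabiliser $U_x$ is conjugate inside $U$ to $U_\star$; that is, $U_x = u U_\star u^{-1}$ for some $u\in U$ depending on $x$.

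Next, the key observation is that $U'=(U,U)$ is a characteristic, hence normal, subgroup of $U$. Consequently, for any $u\in U$,
\[
   U' \cap u U_\star u^{-1} = u(U'\cap U_\star) u^{-1},
\]
so $\dim(U'\cap u U_\star u^{-1}) = \dim(U'\cap U_\star)$. Combining this with the previous step yields $\dim(U')_x = \dim(U'\cap U_x) = \dim(U'\cap U_\star)$ for every $x\in X_0$.

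Finally, I would use upper semi-continuity of the function $x\mapsto \dim(U')_x$ to conclude that its minimum value $d$ is attained on a dense open subset $X_1\subset X$. Since $X_0\cap X_1$ is nonempty, for any point in this intersection the common value gives $d=\dim(U'\cap U_\star)$, which is the required equality. The only delicate point, and the thing I expect to be the main thing to verify, is that the conjugation provided by the BLV theorem takes place inside $U$ (not merely inside $G$), so that normality of $U'$ in $U$ can be applied; this is automatic from the formulation, since the $U_x$ are stabilisers for the $U$-action.
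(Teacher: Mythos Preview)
Your proof is correct and follows essentially the same route as the paper: obtain a dense open set of generic $U$-points, use normality of $U'$ in $U$ to deduce that the $U'$-stabilisers there are all $U$-conjugate to $U_\star\cap U'$, and conclude. You are in fact slightly more explicit than the paper in invoking upper semicontinuity to pass from the generic value to the minimum, a step the paper leaves implicit.
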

\begin{proof}
Let $\Psi\subset X$ be a dense open subset of generic points, i.e., 
$U_x$ is $U$-conjugate to $U_\star$ for any $x\in \Psi$.
Since $U'$ is a normal subgroup, $U_x\cap U'$ is also $U$-conjugate to $U_\star\cap U'$.
Thus, all $U'$-orbits in $\Psi$ are of dimension $\dim U'- \dim (U_\star\cap U')$.
\end{proof}

\begin{rmk}   \label{rmk:gen-stab-U'}
1) If $X$ is (quasi)affine, then one can choose $U_\star$ in a canonical way.
Let $\mathcal M(X)$ be the monoid of highest weight of all simple $G$-modules occurring 
in $\bbk[X]$. Then $U_\star$ is the product of all root unipotent subgroup $U^\mu$ ($\mu\in\Delta^+$)
such that $(\mu,\mathcal M(X))=0$ \cite[Ch.\,1, \S\,3]{pa99}.
Equivalently, $U_\star$ is generated by the simple root unipotent  subgroups
$U^{\ap_i}$ such that $(\ap_i,\mathcal M(X))=0$. 
It follows that $U_\star\cap U'=(U_\star,U_\star)$.
This also means that if $\mathcal M(X)$ is known, then $\min_{x\in X}\dim (U')_x$ can effectively  be computed. 

2) The group $U_\star$ is a maximal unipotent subgroup of a generic
stabiliser for the diagonal $G$-action on $X\times X^*$ \cite[Theorem\,1.2.2]{pa99}. Here $X^*$ is the so-called {\it dual $G$-variety}. It coincides with the dual $G$-module, if $X$ is a $G$-module.
Using  tables of generic 
stabilisers for representations of $G$, one can again compute  $U_\star$ and
$(U_\star,U_\star)$.
\end{rmk}

\section{Poincar\'e series of multigraded algebras of $U'$-invariants}
\label{sect:poinc}

\noindent
Let $X$ be an irreducible affine $G$-variety. (Eventually,  we impose other 
constraints on $X$.)
Since $T$ normalises $U'$, it acts on $X\md U'$ and the algebra $\bbk[X]^{U'}$ acquires 
a multigrading (by $T$-weights). 
Our objective is to describe some properties of the
corresponding Poincar\'e series. 
Before we stick to considering  $U'$-invariants, let us give a brief outline of notation and results to be used below.

\noindent
Let $\eus R$ be a finitely generated $\BN^m$-graded $\bbk$-algebra such that 
$\bbk[\eus R]_0=0$. Set $X=\spe(\eus R)$.

\begin{itemize}
\item  The Poincare series of $\eus R$ is (the Taylor expansion of) a rational function in
$t_1,\dots,t_m$:
\[
   \cf(\eus R;\un{t})=P(\un{t})/Q(\un{t})
\]
for some  polynomials $P,Q$. 

\item  If $\eus R$ is Cohen-Macaulay, then  $\Omega_\eus R$ (or $\Omega_X$) stands for
the canonical module of $\eus R$;  \ $\Omega_\eus R$ is
naturally $\BZ^m$-graded such that the Poincar\'e series of $\Omega_\eus R$ is
\[
   \cf(\Omega_\eus R; \un{t})=(-1)^{\dim X}\cf(\eus R;\un{t}^{-1}).
\]
\item  If $\eus R$ is Gorenstein, then the rational function $\cf(\eus R;\un{t})$ satisfies
the equality 
\[
   \cf(\eus R;\un{t}^{-1})=(-1)^{\dim X}\un{t}^{q(X)}\cf(\eus R;\un{t}),
\]
for some $q(X)=(q_1(X),\dots, q_m(X))\in \BZ^m$, and 
the degree of a homegeneous generator $\omega_\eus R$
of $\Omega_\eus R$ is
$ \deg(\omega_\eus R)= q(X)$ 
\cite[Theorem\,6.1]{st78}, \cite[1.12]{st96}.
\item  If $X$ has only rational singularities, then $q_i(X)\ge 0$ and 
$q(X)\ne (0,\dots,0)$  \cite[Proposition\,4.3]{br-these}
\item  Let $G$ be a semisimple group acting on $X$ (of course, it is assumed that $G$ preserves
the $\BN^m$-grading of $\eus R$). Then there is a relationship betweem $\Omega_\eus R$
and $\Omega_{\eus R^G}$ \cite{kn86} and hence between $q(X)$ and $q(X\md G)$, see below. 
\end{itemize}

\noindent
We begin with the case of $X=G$, where $G$ is regarded as $G$-variety with respect to
right translations.  That is, we are going to study the graded
structure of $\ca=\bbk[G/U']$.  Since $G$ is simply-connected, it is a 
factorial variety. Therefore,  $\spe(\ca)=G\md U'$ is factorial 
(and has only rational singularities). In particular, $G\md U'$ is Cohen-Macaulay (=\,CM).
There is the direct sum decomposition
\[
     \ca=\bigoplus_{\gamma\in \mathfrak X}\ca_\gamma ,
\]
where $\ca_\gamma=\{ f\in \ca \mid f(gt)=\gamma(t)f(g) \textrm{ for any } g\in G, t\in T\}$.
The weights $\gamma$ such that $\ca_\gamma\ne 0$ form a finitely generated monoid, 
which is denoted by $\Gamma$. Since $\sfr(\lb)^{U'}$ is a multiplicity free $T$-module,
it follows  from Eq.~\eqref{eq:ca} that, for any
$\lb\in\mathfrak X_+$, 
different copies of $\sfr(\lb^*)$ lie in the different weight spaces $\ca_\gamma$.
More precisely, the corresponding set of weights is $I_\lb$ (see Section~\ref{sect:inv}).
In particular, two copies of $\sfr(\varpi_i^*)$ belong to $\ca_{\varpi_i}$ and 
$\ca_{\varpi_i-\ap_i}$.
Therefore, $\Gamma$ is generated by the weights
$\varpi_i, \varpi_i-\ap_i$, $i=1,\dots,r$. 
Note that the group generated by $\Gamma$ coincides with $\mathfrak X$, since $\Gamma$
contains all fundamental weights.

\begin{lm}    \label{lm:halfspace}
If $G$ has no simple factors $SL_2$ or $SL_3$, then $\Gamma\setminus\{0\}$ lies in an 
open half-space of  $\mathfrak X_\BQ$, $\ca_0=\bbk$, and $\dim \ca_\gamma< \infty$ for all
$\gamma\in\Gamma$.
\end{lm}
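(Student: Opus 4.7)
The three assertions admit a unified approach using the linear functional $\ell=(\cdot,\rho^\vee)$ on $\mathfrak X_\BQ$, with $\rho^\vee=\frac{1}{2}\sum_{\gamma\in\Delta^+}\gamma^\vee$ as in Proposition~\ref{prop:epi}. The key input is the computation already carried out there: when $G$ has no simple factors $SL_2$ or $SL_3$, one has $(\varpi_i,\rho^\vee)>1$ for every $i$. (For a semisimple $G$ with several simple factors, this holds on each factor, and $(\varpi_i,\rho^\vee_l)=0$ for fundamental weights belonging to a different factor, so the estimate persists for the global $\rho^\vee$.)

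Given this, since $(\alpha_j,\rho^\vee)=1$ for every simple root, we get
\[
(\varpi_i,\rho^\vee)>0 \quad\text{and}\quad (\varpi_i-\alpha_i,\rho^\vee)=(\varpi_i,\rho^\vee)-1>0
\]
for every $i$. As $\Gamma$ is generated by $\{\varpi_i,\varpi_i-\alpha_i\mid 1\le i\le r\}$, we conclude $(\gamma,\rho^\vee)>0$ for every $\gamma\in\Gamma\setminus\{0\}$, which places $\Gamma\setminus\{0\}$ in the open half-space $\{x\mid (x,\rho^\vee)>0\}$. The claim $\ca_0=\bbk$ is then immediate from the interpretation $\ca_0=\bbk[G]^{TU'}$ together with Proposition~\ref{prop:epi}, which says $TU'$ is epimorphic precisely under the present hypothesis on $G$. (One can also see it directly: $\gamma=0$ admits only the trivial expression as a nonnegative combination of generators, since all generators pair strictly positively with $\rho^\vee$.)

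For the finite-dimensionality of $\ca_\gamma$, I would use Eq.~\eqref{eq:ca} to write
\[
\ca_\gamma \;=\; \bigoplus_{\lb\in\mathfrak X_+,\,\gamma\in I_\lb} \sfr(\lb^*)\otimes \sfr(\lb)^{U'}_\gamma.
\]
Each summand is finite-dimensional, so the task is to bound the number of contributing $\lb$. By Theorem~\ref{thm:main1}, $\gamma\in I_\lb$ means $\gamma=\lb-\sum_i a_i\alpha_i$ with $0\le a_i\le n_i$, where $\lb=\sum_i n_i\varpi_i$. Pairing with $\rho^\vee$ yields
\[
(\gamma,\rho^\vee) \;=\; \sum_i n_i(\varpi_i,\rho^\vee)-\sum_i a_i \;\ge\; \sum_i n_i\bigl((\varpi_i,\rho^\vee)-1\bigr) \;\ge\; c\sum_i n_i,
\]
where $c:=\min_i\bigl((\varpi_i,\rho^\vee)-1\bigr)>0$ by the very hypothesis on $G$. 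Hence $\sum_i n_i\le (\gamma,\rho^\vee)/c$, so there are only finitely many admissible $\lb$, and $\dim\ca_\gamma<\infty$.

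There is no serious obstacle once the right functional $\rho^\vee$ is identified; all three statements reduce to the quantitative estimate $(\varpi_i,\rho^\vee)>1$ already established in Proposition~\ref{prop:epi}. The mildly delicate point is checking that this estimate survives when $G$ is semisimple rather than simple, which is handled by the orthogonality of fundamental weights across distinct simple factors.
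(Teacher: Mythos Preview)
Your proof is correct and follows essentially the same approach as the paper: use the linear functional $(\cdot,\rho^\vee)$ and the estimate $(\varpi_i,\rho^\vee)>1$ from the proof of Proposition~\ref{prop:epi} to get the half-space, and invoke the epimorphic property of $TU'$ for $\ca_0=\bbk$. The only notable difference is in the third assertion: the paper dispatches finite-dimensionality in one line by appealing to finite generation of $\ca$ together with $\ca_0=\bbk$, whereas you give a direct and more quantitative bound on the set of $\lb$ with $\gamma\in I_\lb$ via the explicit description of $I_\lb$; both arguments are valid, and your extra care in passing from simple to semisimple $G$ is a point the paper leaves implicit.
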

\begin{proof}
It is shown in the proof of Proposition~\ref{prop:epi} that $(\rho^\vee,\varpi_i-\ap_i)>0$ for all
$i$. Hence the half-space  determined
by $\rho^\vee$ will do. We have $\ca_0=\bbk[G/TU']=\bbk$, since $TU'$ is epimorphic.
This also implies the last claim, because $\ca$ is finitely generated.
\end{proof}

The algebra $\ca$ is $\Gamma$-graded, and we are going to study the 
corresponding Poincar\'e series. Unfortunately, $\Gamma$ is not always a free monoid. 
Therefore we want to embed $\Gamma$ into a free monoid $\BN^r$. This is always possible,
if $\Gamma$ generates a convex cone in $\mathfrak X_\BQ$, see e.g. 
\cite[Corollary\,7.23]{m-s}. For this reason, 
we assume below that $G$ has no simple factors $SL_2$ or $SL_3$,  and choose an embedding $\Gamma\hookrightarrow \BN^r$.
In other words, we  find $v_1,\dots,v_r\in \mathfrak X$ such that
$\mathfrak X=\bigoplus_{i=1}^r \BZ v_1$ and $\Gamma\subset \bigoplus_{i=1}^r \BN v_1$.
Furthermore, one can achieve that $(v_i, \rho^\vee)>0$ for all $i$. Then
$(v_1,\dots,v_r)$ is said to be a $\Gamma$-{\it adapted basis\/} for $\mathfrak X$.
Thus, every $\gamma\in\Gamma$ gains a unique expression of the
form $\gamma=\sum_i k_i(\gamma)v_i$, \ $k_i(\gamma)\in\BN$.

Now, we define the multigraded Poincar\'e series of $\ca$ as the power series
\[
  \cf(\ca; t_1,\dots,t_r)=\cf(\ca; \un{t})=\sum_{\gamma\in\Gamma}(\dim\ca_{\gamma})\un{t}^\gamma  ,
\]
where $\un{t}^\gamma=t_1^{k_1(\gamma)}\dots t_r^{k_r(\gamma)}$.
As is well-known, $\cf(\ca; \un{t})$ is a rational function. Since $\ca$ is a factorial CM domain,
it is Gorenstein. Therefore, there exists $\un{a}=(a_1,\dots, a_r)\in \BZ^r$ such that
\begin{equation}  \label{eq:f-a-sym}
     \cf(\ca; \un{t}^{-1})=(-1)^{\dim G/U'} \un{t}^{\un{a}} \cf(\ca; \un{t}), 
\end{equation}
where $\un{t}^{-1}=(t_1^{-1},\dots,t_r^{-1})$
\cite[\S\,6]{st78}. Moreover,
since $G\md U'$ has only rational singularities, all $a_i$ are actually non-negative, and
$\un{a}\ne (0,\dots,0)$ \cite[Proposition\,4.3]{br-these}.

Set $b(\ca):=\sum_{i=1}^r a_i v_i\in \mathfrak X$. A priori,  this element might
depend on the choice of an embedding $\Gamma \hookrightarrow \BN^r$.
Fortunately, it doesn't. Roughly speaking, this can be explained via properties of the
{\it canonical module\/} $\Omega_\ca$, which is a free $\ca$-module of rank one.
However, even if we accurately accomplish this program, then we still do 
not find the very element $b(\ca)\in\mathfrak X$.
Therefore, we choose another path. Our plan consists of the following steps:
\begin{enumerate}
\item $\ca^U$ is a polynomial algebra and its Poincar\'e series can be written down 
explicitly;
\item Using the formula for this Poincare series, we determine 
$b(\ca^U)\in \mathfrak X$;
\item Using results of \cite[5.4]{pa99}, we prove that $b(\ca)=b(\ca^U)$.
\end{enumerate}

\noindent
The algebra $\ca^U$ is acted upon by $T\times T$. Two copies
of $T$ acts on $\ca^U\subset \bbk[G]$ via left and right translations. 
For the presentation of Eq.~\eqref{eq:ca-u},  the first (resp. second) copy of $T$ acts on 
the  first (resp. second) factor in tensor products.  Then 
\[
    \ca^U=\bigoplus_{\lb\in\mathfrak X_+, \gamma\in\Gamma} \ca^U_{\lb,\gamma},
\]
where $\ca^U_{\lb,\gamma}=\{f\in \ca^U\subset 
\bbk[G] \mid f(tgt')=\lb(t)^{-1}\gamma(t')f(g) \ \text{ for all }t,t'\in T\}$, and we set
\[
   \cf(\ca^U; \un{s},\un{t})=\sum_{\lb,\gamma}(\dim \ca^U_{\lb,\gamma})\un{s}^\lb \un{t}^\gamma .
\]
Here  $\un{s}=(s_1,\dots,s_r)$ and $\un{s}^\lb=s_1^{n_1}\dots s_r^{n_r}$ if $\lb=\sum_i n_i\varpi_i$.

\begin{prop}   \label{prop:poinc-A^U}  
We have
\[
 \cf(\ca^U; \un{s},\un{t})=
 \prod_{i=1}^r \frac{1}{(1-s_{i^*}\un{t}^{\varpi_i})(1-s_{i^*}\un{t}^{\varpi_i-\ap_i})} ,
\]
where $i^*$ is defined by $(\varpi_i)^*=\varpi_{i^*}$.
\end{prop}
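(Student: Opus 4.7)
The plan is to derive the formula as a direct consequence of Theorem~\ref{thm:main2}, which asserts that $\ca^U$ is freely generated as a polynomial algebra by $f_1,\tilde f_1,\dots,f_r,\tilde f_r$. For any polynomial algebra whose generators are $T\times T$-weight vectors, the Poincar\'e series factorises as $\prod_\alpha 1/(1-m_\alpha)$, where $m_\alpha$ is the monomial encoding the bi-weight of the $\alpha$-th generator. Hence the entire task reduces to identifying the $T\times T$-bi-weight of each $f_i$ and each $\tilde f_i$.

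First, I would invoke the decomposition~\eqref{eq:ca-u}: the pair $(f_i,\tilde f_i)$ forms a basis of the two-dimensional summand $\sfr(\varpi_i^*)^U\otimes \sfr(\varpi_i)^{U'}$ (the dimension being $2$ by Proposition~\ref{fundam}), with the left $T$-weight controlled by the first tensor factor and the right $T$-weight controlled by the second. The first factor $\sfr(\varpi_i^*)^U$ is the highest-weight line of $\sfr(\varpi_i^*)$, so its $T$-weight equals $\varpi_i^*=\varpi_{i^*}$; consequently both $f_i$ and $\tilde f_i$ have left weight $\varpi_{i^*}$, contributing the monomial $\un{s}^{\varpi_{i^*}}=s_{i^*}$.

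Second, by the very definition of $f_i$ and $\tilde f_i$ in Theorem~\ref{thm:main2}, $f_i$ lies in $\sfr(\varpi_i)_{\varpi_i}$ and $\tilde f_i$ in $\sfr(\varpi_i)_{\varpi_i-\ap_i}$, so under the right $T$-action they contribute the monomials $\un{t}^{\varpi_i}$ and $\un{t}^{\varpi_i-\ap_i}$ respectively. Assembling the bi-weights, the full bi-monomials are $s_{i^*}\un{t}^{\varpi_i}$ for $f_i$ and $s_{i^*}\un{t}^{\varpi_i-\ap_i}$ for $\tilde f_i$, and multiplying the corresponding geometric series over $i=1,\dots,r$ yields precisely the formula of the proposition.

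There is essentially no obstacle here; once the identification of generators and their bi-weights is made, the argument is purely formal bookkeeping. The only point demanding any attention is the appearance of the dual index $i^*$, which originates from the convention in~\eqref{eq:ca-u} that the left tensor factor is $\sfr(\lb^*)^U$ rather than $\sfr(\lb)^U$, and hence the left $T$-weight of a highest-weight vector in that line is $\lb^*$ rather than $\lb$.
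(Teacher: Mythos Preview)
Your proposal is correct and follows essentially the same route as the paper's own proof: invoke the free generation of $\ca^U$ by the $2r$ elements $f_i,\tilde f_i$ (Theorem~\ref{thm:main2}), read off their $(T\times T)$-bi-weights from the decomposition~\eqref{eq:ca-u} and Proposition~\ref{fundam}, and take the product of the resulting geometric series. The paper compresses this into two sentences, but the content is identical.
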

\begin{proof}
This follows from the fact that $\ca^U$ is freely generated by the space
$R=\bigoplus_{i=1}^r\, \sfr(\varpi_i^*)^U\otimes \sfr(\varpi_i)^{U'}$, and the 
$(T\times T)$- weights of  a bi-homogeneous basis of $R$ are 
$(\varpi_i^*, \varpi_i),  (\varpi_i^*, \varpi_i-\ap_i)$, $i=1,\dots,r$.
\end{proof}

\noindent
Of course, $\un{t}^{\varpi_i}$ should be understood as 
$t_1^{k_1(\varpi_1)}\dots t_r^{k_r(\varpi_r)}$, and likewise for $\varpi_i-\ap_i$.
Since  $\sum_i (\varpi_i+\varpi_i-\ap_i)=2\rho-|\Pi|=|\Delta^+\setminus \Pi|$, we readily obtain

\begin{cl}   \label{cor:f-a^u}
$\cf(\ca^U; \un{s}^{-1},\un{t}^{-1})=(s_1\dots s_r)^2 \un{t}^{2\rho-|\Pi|}\cf(\ca^U; \un{s},\un{t})$.
\end{cl}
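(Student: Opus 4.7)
The proof is a direct computation from the explicit product formula in Proposition~\ref{prop:poinc-A^U}. I would substitute $(\un{s}^{-1},\un{t}^{-1})$ into each of the $2r$ factors and apply the identity
\[
 \frac{1}{1-X^{-1}} \;=\; \frac{-X}{1-X},
\]
where $X$ is the monomial $s_{i^*}\un{t}^{\varpi_i}$ or $s_{i^*}\un{t}^{\varpi_i-\ap_i}$.

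Each of the $2r$ factors then picks up an extra monomial in the numerator and a sign $-1$. The signs combine to $(-1)^{2r}=+1$, so no overall sign survives. The $s$-contributions accumulate to $\prod_{i=1}^r s_{i^*}^{\,2}$, and since $i\mapsto i^*$ is a permutation of $\{1,\dots,r\}$, this equals $(s_1\cdots s_r)^2$. The $\un{t}$-contributions accumulate to
\[
 \un{t}^{\,\sum_{i=1}^r(\varpi_i+(\varpi_i-\ap_i))} \;=\; \un{t}^{\,2\rho-\sum_{i=1}^r\ap_i} \;=\; \un{t}^{\,2\rho-|\Pi|},
\]
using the observation recorded right before the corollary that $\sum_i\bigl(\varpi_i+(\varpi_i-\ap_i)\bigr)=2\rho-|\Pi|$. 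The remaining denominator is by construction the product defining $\cf(\ca^U;\un{s},\un{t})$, which finishes the identity.

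There is no real obstacle here; the only tiny point that deserves an explicit word is that $i\mapsto i^*$ is a bijection of the index set (so that $\prod_i s_{i^*}^{\,2}=\prod_i s_i^{\,2}$), and that the total number of factors $2r$ is even (so the overall sign is trivial). Both are clear from the setup.
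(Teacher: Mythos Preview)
Your proof is correct and follows exactly the approach the paper intends: the corollary is stated as an immediate consequence of the product formula in Proposition~\ref{prop:poinc-A^U} together with the observation $\sum_i(\varpi_i+\varpi_i-\ap_i)=2\rho-|\Pi|$, and your computation spells this out precisely.
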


One can disregard (for a while) the $\mathfrak X_+$-grading of $\ca^U$ and consider 
only the $\Gamma$-grading induced from $\ca$. This amount to letting $s_i=1$ for all $i$.  
Then we obtain
$b(\ca^U)=2\rho-|\Pi|$, and, surely, this does not depend on the choice of 
$\Gamma \hookrightarrow \BN^r$. Thus, we have completed steps (1) and (2) of the 
above plan.

Now, we recall a relationship between the multigraded  Poincar\'e series of algebras $\bbk[X]$
and $\bbk[X]^U$. For $G$-modules, these results are due to 
M.~Brion \cite[Ch.\,IV]{br-these}, \cite[Theorem\,2]{br83}. A general version is found in 
\cite{pa95}, \cite[Ch.\,5]{pa99}. We will consider two types of conditions imposed on 
$G$-varieties $X$:

\vskip1ex
\hbox to \textwidth{\ ($\mathfrak C_1$) \hfill $\left\{ \text{
\parbox{410pt}{ $X$ is an irreducible factorial $G$-variety with only rational singularities 
and $\bbk[X]^G=\bbk$.}}\right.$}

\vskip1.5ex
\hbox to \textwidth{\ ($\mathfrak C_2$) \hfill \hfill $\left\{ \text{
\parbox{410pt}{ $X$ is an irreducible factorial $G$-variety with only rational singularities;
$\bbk[X]$ is $\BN^m$-graded, 
$\bbk[X]=\bigoplus_{n\in\BN^m} \bbk[X]_n$, and $\bbk[X]_0=\bbk$. }}\right.$}

\vskip1ex\noindent
In particular, $X$ is Gorenstein in both cases. Suppose $X$ satisfies ($\mathfrak C_2$).
The Poincar\'e series of the Gorenstein algebra $\bbk[X]$ satisfies an equality of the form
\begin{equation}  \label{eq:symmetry-X}
  \cf(\bbk[X]; \un{t}^{-1})=(-1)^{\dim X} \un{t}^{q(X)}\cf(\bbk[X]; \un{t}),
\end{equation}
where $\un{t}=(t_1,\dots,t_m)$ and $q(X)=(q_1(X),\dots,q_m(X))$. 
The affine variety  $X\md U$ 
inherits all good properties of $X$, i.e., it is irreducible, factorial, etc. Furthermore,
$\bbk[X]^U$ is naturally $\mathfrak X_+ \times \BN^m$-graded, and one defines the Poincar\'e
series
\[
  \cf(\bbk[X]^ U; \un{s}, \un{t})=\sum_{\lb\in\mathfrak X_+, {n}\in\BN^m} (\dim \bbk[X]^U_{\lb, n})
  \un{s}^\lb \un{t}^{{n}} .  
\]
Since $X\md U$ is again Gorenstein, this series satisfies
an equality of the form
\[
\cf(\bbk[X]^U; \un{s}^{-1}, \un{t}^{-1})=(-1)^{\dim X\md U} \un{s}^{\un{b}}\, 
\un{t}^{q(X\md U)}  \cf(\bbk[X]^ U; \un{s}, \un{t}) 
\]
for some $\un{b}=\un{b}(X\md U)=(b_1,\dots, b_r)$ and 
$q(X\md U)=(q_1(X\md U),\dots,q_m(X\md U))$. 

\begin{thm}[see \protect {\cite[Theorem\,5.4.26]{pa99}}]    \label{thm:p99}
Suppose that $X$ satisfies condition ($\mathfrak C_2$). Then
\begin{enumerate}
\item $0\le b_{i}\le 2$; 
\item $0\le q_{i}(X\md U)\le q_{i}(X)$ for all $i$;
\item the following conditions are equivalent:
\begin{itemize}
\item  $\un{b}=(2,\dots,2)$;
\item  For $D=\{z\in X\mid  \dim U_z>0\}$, we have $\codim_X D\ge 2$;
\item  ${q}(X\md U)={q}(X)$;
\end{itemize}
\end{enumerate}
\end{thm}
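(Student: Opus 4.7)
The plan is to follow the strategy of \cite[Ch.\,5]{pa99}: relate $\bbk[X]^U$ to $\bbk[X]$ via the transfer principle $\bbk[X]^U\simeq(\bbk[X]\otimes\bbk[\tilde X])^G$ with $\tilde X=\spe(\bbk[G]^U)$, and then derive the symmetry of the Poincar\'e series by applying Knop's theorem \cite{kn86} on canonical modules of invariant subalgebras. Both $X$ and $\tilde X$ have only rational singularities (the latter by Kraft's theorem \cite{br-these}) and are factorial, hence Gorenstein; $X\md U$ inherits the same properties by the $U$-analogue of Theorem~\ref{thm:RS}. Since $\dim\sfr(\lb)^U=1$ and $\sfr(\lb)^U$ has weight $\lb$, the isotypic decomposition $\bbk[X]=\bigoplus_\lb\sfr(\lb)\otimes M_\lb$ gives
\[
\cf(\bbk[X]^U;\un{s},\un{t})=\sum_{\lb\in\mathfrak X_+}\cf(M_\lb;\un{t})\,\un{s}^\lb,
\]
so the task reduces to reading the symmetry of this refined series off from the symmetry of $\cf(\bbk[X];\un{t})$ together with the structure of $\bbk[\tilde X]$.

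Next I would apply Knop's theorem to the diagonal $G$-action on $Y=X\times\tilde X$, obtaining an identification of $\Omega_{Y\md G}=\Omega_{X\md U}$ with an explicit $G$-invariant submodule of $\Omega_X\otimes\Omega_{\tilde X}$. Because $\Omega_{\tilde X}$ is a free module of rank one whose canonical generator carries $T$-weight $2\rho=\sum_i 2\varpi_i$, its contribution to the $\un{s}$-twist of the Poincar\'e-series symmetry is at most $2$ in every fundamental-weight coordinate; this gives $0\le b_i\le 2$ and proves~(1). For~(2), the lower bound $q_i(X\md U)\ge 0$ is the general consequence of rational singularities of $X\md U$ combined with Gorenstein-ness, recalled in the bulleted list before the theorem; the upper bound $q_i(X\md U)\le q_i(X)$ reflects that in Knop's identification the $\un{t}$-shift is governed solely by $\Omega_X$ and can only be diminished, never amplified, by taking $G$-invariants.

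The hard part is~(3), where one must match algebraic saturation in Knop's formula with the geometric condition on $D=\{z\in X\mid\dim U_z>0\}$. My plan is to analyse the quotient $\pi:X\to X\md U$ in codimension one: a codimension-one component of $D$ carries a $U$-semi-invariant defining equation whose non-trivial $T$-weight strictly lowers some $b_i$ below $2$ and, in parallel, forces a strict drop of some $q_i(X\md U)$ below $q_i(X)$; conversely, $\codim_X D\ge 2$ ensures that $\pi^*\Omega_{X\md U}$ and $\Omega_X$ differ only by a $B$-character whose $T$-weight is exactly $2\rho$, yielding simultaneously $\un{b}=(2,\dots,2)$ and $q(X\md U)=q(X)$. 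The main obstacle is the bookkeeping of semi-invariant weights of codimension-one divisors in $X$ and the simultaneous tracking of the $\un{s}$- and $\un{t}$-gradings, so that the three extremal conditions in~(3) reveal themselves as three faces of one and the same saturation statement in Knop's identity.
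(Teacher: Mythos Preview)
The paper does not prove this theorem; it is quoted from \cite[Theorem\,5.4.26]{pa99} and used as input. The closest thing to a ``paper's own proof'' is the argument given for the $U'$-analogue, Theorem~\ref{thm:analog-5-4-21}, which the paper says is an adaptation of \cite[Theorem\,5.4.21]{pa99}. Your overall architecture (transfer principle plus Knop's comparison of canonical modules) matches that argument, and your treatment of parts~(1) and~(2) is essentially what one finds there: set $Z=X\times\tilde X$, use the injective degree-zero map $\bar\gamma:\Omega_Z\to\wedge^m\g^*\otimes\pi_G^*(\Omega_{Z\md G})$ from \cite{kn86}, and read off the inequalities from the degree of the generator of $\Omega_{\tilde X}$ (which has $T$-weight $2\rho$).

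For part~(3), however, your plan diverges from the argument recorded in the proof of Theorem~\ref{thm:analog-5-4-21}, and the divergence is where the imprecision lies. You propose to work directly on $X$ via ``$U$-semi-invariant defining equations'' of codimension-one components of $D$; but $U$ is unipotent and has no non-trivial characters, so every $U$-semi-invariant is already $U$-invariant and there is no weight to ``strictly lower $b_i$''. The actual mechanism, as in the paper's proof of the analogue, is to translate the conditions on $X$ to conditions on $Z$: one replaces $\codim_X D\ge 2$ by $\codim_Z\tilde D\ge 2$ with $\tilde D=\{z\in Z\mid\dim G_z>0\}$, and replaces $\un b=(2,\dots,2)$ by $\deg(\omega_{Z\md G})=\deg(\omega_Z)$. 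The equivalence is then extracted from the element $c\in(\wedge^m\g^*\otimes\bbk[Z])^G$ furnished by Knop's theorem: $\deg c=0$ forces $c$ to factor through the $\BN^m$-degree-zero part, hence (in the $(\mathfrak C_2)$ setting) through $\bbk[X]_0=\bbk$, and then the interpretation of $c'(z)$ as Pl\"ucker coordinates of $\g_z^\perp$ gives $m=\dim G$ and rules out codimension-one components of $\tilde D$. Your sketch gestures at the right invariants but does not isolate this element $c$, and without it the ``bookkeeping'' you anticipate has no object to attach to.
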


\noindent
Let us apply this theorem to the $G$-variety $\spe(\ca)=G\md U'$.
The algebra $\ca$ is $\Gamma$-graded and hence suitably $\BN^r$-graded, as explained before. Note that $\spe(\ca)$ satisfies both conditions
($\mathfrak C_1$) and ($\mathfrak C_2$). At the moment, we consider $X=\spe(\ca)$ as variety
satisfying condition ($\mathfrak C_2$), with $m=r$.
Comparing Eq.~\eqref{eq:f-a-sym} and \eqref{eq:symmetry-X}, we see that
$\un{a}=q(X)$.
Proposition~\ref{prop:poinc-A^U} and 
Corollary~\ref{cor:f-a^u} show that here $\un{b}(X\md U)=(2,\dots,2)$ and $q(X\md U)$ 
corresponds  to $b(\ca^U)=2\rho-|\Pi|$.  Now, 
Theorem~\ref{thm:p99}(3) guarantee us that  $q(X)=q(X\md U)$, i.e.,
\begin{equation}   \label{eq:q(a)=q(au)}
b(\ca)=b(\ca^U)=2\rho-|\Pi| .
\end{equation}

\noindent
This completes our computation of $b(\ca)$. Note that we computed  $b(\ca)$ without
knowing an explicit formula of the Poincar\'e series $\cf(\ca; \un{t})$.

Our next goal is to obtain analogues of results of \cite[5.4]{pa99}, 
where $U$ is replaced with $U'$, i.e., results on Poincar\'e series of algebras $\bbk[X]^{U'}$.

\noindent
Suppose $X$ satisfies $(\mathfrak C_1)$.
The algebra $\bbk[X]^{U'}$ is  $\Gamma$-graded, and we consider the 
Poincar\'e series
\[
   \cf(\bbk[X]^{U'}; \un{t})=\sum_{\gamma\in\Gamma}
   \dim \bbk[X]_{\gamma}^{U'}\,\un{t}^\gamma ,
\]
where $\bbk[X]_{\gamma}^{U'}=\{f\in \bbk[X]^{U'} \mid f(t.z)=\gamma(t)^{-1}f(z)\}$ and, as 
above, $\un{t}^\gamma$ is determined via the  choice of a $\Gamma$-adapted
basis $(v_1,\dots,v_r)$ for $\mathfrak X$.
The assumption $\bbk[X]^G=\bbk$ and the convexity of the cone generated by $\Gamma$ guarantee us that $\bbk[X]^{U'}_0=\bbk$ and all spaces $\bbk[X]_{\gamma}^{U'}$ are 
finite-dimensional.
Since $X\md U'$ is again factorial, with only rational singularities (Theorem~\ref{thm:RS}), 
it is Gorenstein and hence
\[
   \cf(\bbk[X]^{U'};  \un{t}^{-1})=(-1)^{\dim X\md U'}\un{t}^a\, 
    \cf(\bbk[X]^{U'}; \un{t})
\] 
for some $a=(a_1,\dots,a_r)\in \BN^r$. Using the basis 
$(v_1,\dots,v_r)$, we set $b(X\md U')=\sum_{i=1}^r a_i v_i\in \mathfrak X$. 

\begin{thm}   \label{thm:analog-5-4-21}
Suppose that $X$ satisfies $(\mathfrak C_1)$. Then
\begin{enumerate}
\item $0\le b(X\md U')\le b(\ca)=2\rho-|\Pi|$ \\(componentwise, with respect to 
any  $\Gamma$-adapted basis $v_1,\dots,v_r$); 
\item the following conditions are equivalent:
\begin{itemize}
\item[a)] \   $b(X\md U')= 2\rho-|\Pi|$;
\item[b)] \   For $D=\{x\in X\mid  \dim (U')_x>0\}$, we have $\codim_X D\ge 2$;
\end{itemize}
\end{enumerate}
\end{thm}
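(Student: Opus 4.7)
The plan is to mimic the proof of Theorem~\ref{thm:p99} given in \cite[Sect.~5.4]{pa99}, replacing $U$ by $U'$ throughout and feeding in our already established description of $\ca=\bbk[G/U']$. First I would invoke the transfer principle to rewrite
\[
 \bbk[X]^{U'}\simeq (\bbk[X]\otimes\ca)^G,
\]
where $G$ acts diagonally on $\spe(\bbk[X]\otimes\ca)\simeq X\times(G\md U')$. Placing $\bbk[X]$ in $\Gamma$-degree zero and giving $\ca$ its natural $\Gamma$-grading makes $R:=\bbk[X]\otimes\ca$ into a $\Gamma$-graded factorial Gorenstein $G$-algebra with rational singularities, and Theorem~\ref{thm:RS} guarantees the same properties for $R^G=\bbk[X]^{U'}$.

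Next, since $\Omega_R\simeq \Omega_{\bbk[X]}\otimes_{\bbk}\Omega_\ca$ and $\bbk[X]$ sits in $\Gamma$-degree zero, the generator of $\Omega_R$ has $\Gamma$-degree equal to that of $\Omega_\ca$, which is $2\rho-|\Pi|$ by Eq.~\eqref{eq:q(a)=q(au)}. I would then invoke Knop's comparison theorem \cite{kn86} in the multigraded form used in \cite[Thm.~5.4.21]{pa99}: for a reductive group $G$ acting on a factorial Gorenstein $\Gamma$-graded algebra $R$ with rational singularities, the homogeneous generator of $\Omega_{R^G}$ is a non-zero homogeneous element of $\Omega_R$, whose $\Gamma$-degree is componentwise $\le$ that of the generator of $\Omega_R$, with equality if and only if the locus $\{z\in\spe(R)\mid \dim G_z>0\}$ has codimension $\ge 2$ in $\spe(R)$. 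This immediately delivers part~(1) and reduces the implication (a)$\Leftrightarrow$(b) of part~(2) to a translation of the codimension condition from $\spe(R)$ back to $X$.

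Finally I would translate the codimension condition. Since the boundary $(G\md U')\setminus(G/U')$ has codimension $\ge 2$ by Theorem~\ref{thm:model}(iii), it suffices to work on the open piece $X\times(G/U')\subset\spe(R)$. There the $G$-stabiliser of $(x,gU')$ is $G_x\cap gU'g^{-1}$, and the locus $D^\sharp$ on which this stabiliser is positive-dimensional is the $G$-saturation of $D\times\{eU'\}$. The $G$-equivariant surjection $G\times D\twoheadrightarrow D^\sharp$, $(g,x)\mapsto(gx,gU')$, has fibres of dimension $\dim U'$, so
\[
 \codim_{X\times (G/U')}D^\sharp=\codim_X D,
\]
and the equivalence of (a) and (b) follows.

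The step I expect to be most delicate is the multigraded application of Knop's theorem: one must verify that the generator of $\Omega_{R^G}$ refines componentwise with respect to any $\Gamma$-adapted basis $v_1,\dots,v_r$, not merely in total degree. This is precisely the content worked out in \cite[Sect.~5.4]{pa99}, and our proof will transfer those arguments essentially verbatim, with $\ca=\bbk[G/U']$ now playing the role formerly held by $\bbk[G/U]$.
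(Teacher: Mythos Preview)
Your proposal is correct and follows essentially the same route as the paper: both set $Z=X\times(G\md U')$ via the transfer principle, apply Knop's comparison theorem \cite{kn86} to obtain the componentwise inequality of part~(1), and then translate the codimension condition on $Z$ back to $X$ using Theorem~\ref{thm:model}(iii). The paper spells out the Knop argument in detail (the element $c\in(\wedge^m\g^*\otimes\bbk[Z])^G$ and the use of $\bbk[X]^G=\bbk$ to force $c$ constant), whereas you package this step as a citation to \cite[Thm.~5.4.21]{pa99}; your explicit fibre-dimension computation for $\codim D^\sharp=\codim_X D$ is a clean alternative to the paper's one-line appeal to the codimension of the boundary, but the substance is the same.
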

\begin{proof}
Using our results on $\ca$ and $\ca^U$ obtained above, one can easily adapt the
proof of  \cite[Theorem\,5.4.21]{pa99}. For the reader's convenience, we recall the argument. 

(1) We have $0\le b(X\md U')$, since $X\md U'$ has rational singularities.\\
Set $Z=X\times (G\md U')$. It is a factorial $G$-variety with only rational singularities
and $\bbk[Z]=\bbk[X]\otimes \ca$. Define the $\Gamma$-grading of $\bbk[Z]$ by
$\bbk[Z]_\beta=\bbk[X]\otimes \ca_\beta$, $\beta\in\Gamma$. By the transfer principle,
$\bbk[Z]^G \simeq \bbk[X]^{U'}$ and the $\Gamma$-grading of 
$\bbk[X]^{U'}$
corresponds under this isomorphism to the $\Gamma$-grading of
$\bbk[Z]^G$ as subalgebra of $\bbk[Z]$.

In this situation (a semisimple group $G$ acting on a factorial variety $Z$ with only 
rational singularities), one can apply results of Knop to
the quotient morphism $\pi_G: Z\to Z\md G$. Set $m=\max_{z\in Z} \dim G.z$. 
Recall that $\Omega_X$ is the canonical module of  $\bbk[X]$.
By  Theorems~1,2 in \cite{kn86}, there is an injective $G$-equivariant homomorphism of degree 0
of graded $\bbk[Z]$-modules
\[
  \bar\gamma: \Omega_Z   
  \to\wedge^m \g^*\otimes \pi_G^* (\Omega_{Z\md G}).
\]
Here $\Omega_Z=\Omega_X\otimes \Omega_{G\md U'}$ and grading of $\Omega_Z$
comes
from the grading of $\Omega_{G\md U'}$.
The injectivity of $\bar\gamma$ implies that 
\[
  b(X\md U')=
  \left\{ \text{\parbox{5.1cm}{degree of a homogeneous generator of  $\Omega_{X\md U'}
   \simeq\Omega_{Z\md G}$}}
   \right\} \le
   \left\{ \text{\parbox{4.8cm}{degree of a homogeneous generator of $\Omega_{G\md U'}
   $}}
   \right\}=b(\ca). 
 \]
This yields the rest of part (1). 

(2) To prove the equivalence of  a) and b), we replace each of them with an equivalent
condition stated in terms of $Z$:

a')  $\deg (\omega_{Z\md G})=\deg(\omega_Z)$;

b')  $\codim_Z \tilde D\ge 2$, where $\tilde D=\{z\in Z \mid \dim G_z >0\}$.

\noindent The argument in part (1) shows that  a) and a') are equivalent.
The equivalence of b) and b') follows from the fact that $G/U'$ is dense in $G\md U'$
and the complement is of codimension $\ge 2$, see Theorem~\ref{thm:model}.

The injectivity  and $G$-equivariance of $\bar\gamma$ means that there is
$c\in (\wedge^m \g^*\otimes \bbk[Z])^G$ such  that 
$\bar\gamma(\omega_Z)=c{\cdot}\omega_{Z\md G}$.
We can regard $c$ as $G$-equivariant morphism $c': Z\to \wedge^m \g^*$.
It is shown in \cite{kn86} that if $\dim G.z=m$ and $z\in Z_{reg}$, then 
$c'(z)$ is nonzero and it yields (normalised) Pl\"ucker coordinates of the $m$-dimensional space
$\g_z^\perp \subset \g^*$.

Assume a'), i.e.,  $\deg (\omega_{Z\md G})=\deg(\omega_Z)$. Then $\deg c=0$, i.e., 
\[
   c\in (\wedge^m \g^*\otimes \bbk[Z]_0)^G=(\wedge^m \g^*\otimes \bbk[X])^G.
\]
This means that $c'$ can be pushed through the projection to $X$:
\[
    Z=X\times (G\md U') \to X\to \wedge^m \g^*.
\]
Let $z=(x,v)\in X\times (G\md U') $ be a generic point, i.e.,  
$x\in X_{reg}$, $v\in G/U'$, and  $\dim G.z=m$.
Since $c'(z)$ depends only on $x$, we see that $\g_z$ does not depend on $v$. But this is only possible if $\dim\g_z=0$, that is, $m=\dim G$.  This already proves that $\codim_Z\tilde D \ge 1$.
If $\codim_Z\tilde D = 1$, then formulae (6), (7), (12) in \cite{kn86} show that
$\tilde D=\{z\in Z\mid c'(z)=0\}$. However,  $\wedge^m \g^*$ is the 
trivial 1-dimensional $G$-module, hence $c\in \bbk[X]^G=\bbk$. That is, $c'$ is a constant (nonzero)
mapping. This contradiction shows that $\codim_Z\tilde D \ge 2$.

Conversely, if b') holds, then $\tilde D$ is a proper subvariety of  $Z$, i.e., $m=\dim G$ and 
$c\in (\wedge^{\dim G}\g^*\otimes \bbk[Z])^G=\bbk[Z]^G$. Furthermore, since
$\codim_Z\tilde D \ge 2$,  $c$ has no zeros on $Z$ (because $Z$ is normal and
$c'(z)=c(z)\ne 0$ for any
$z\in Z_{reg}\setminus \tilde D$.) It follows that $c$ is constant, $\deg c=0$ and hence
$\deg (\omega_{Z\md G})=\deg(\omega_Z)$.
\end{proof}

\noindent
If $X$ satisfies $(\mathfrak C_2)$, then
the algebra $\bbk[X]^{U'}$ is naturally $\Gamma\times \BN^m$-graded, and we consider the 
Poincar\'e series
\[
   \cf(\bbk[X]^{U'}; \un{s}, \un{t})=\sum_{n\in\BN^m, \gamma\in\Gamma}
   \dim \bbk[X]_{n,\gamma}^{U'}\,\un{s}^n \un{t}^\gamma ,
\]
where $\bbk[X]_{n,\gamma}^{U'}=\{f\in \bbk[X]_{n}^{U'} \mid f(t.z)=\gamma(t)^{-1}f(z)\}$ and 
$\un{t}^\gamma$ is as above.
Since $X\md U'$ is again Gorenstein, we have
\[
   \cf(\bbk[X]^{U'}; \un{s}^{-1}, \un{t}^{-1})=(-1)^{\dim X\md U'}\un{t}^a\, \un{s}^{q(X\md U')}
    \cf(\bbk[X]^{U'}; \un{s}, \un{t})
\] 
for some $a=(a_1,\dots,a_r)\in \BN^r$ and $q(X\md U')\in \BN^m$. Using the basis 
$(v_1,\dots,v_r)$, we set $b(X\md U')=\sum_{i=1}^r a_i v_i\in \mathfrak X$. 
The following is a $U'$-analogue of Theorem~\ref{thm:p99}.

\begin{thm}    \label{thm:analog-5-4-26}
Suppose that $X$ satisfies $(\mathfrak C_2)$. Then
\begin{enumerate}
\item $0\le b(X\md U')\le b(\ca)=2\rho-|\Pi|$ \\(componentwise, with respect to 
any  $\Gamma$-adapted basis $v_1,\dots,v_r$); 
\item $0\le q_i(X\md U')\le q_i(X)$ for all $i$;
\item the following conditions are equivalent:
\begin{itemize}
\item[(i)] \  $b(X\md U')= 2\rho-|\Pi|$;
\item[(ii)] \   For $D=\{x\in X\mid  \dim (U')_x>0\}$, we have $\codim_X D\ge 2$;
\item[(iii)] \   ${q}(X\md U')={q}(X)$.
\end{itemize}
\end{enumerate}
\end{thm}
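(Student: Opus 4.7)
The plan is to adapt the proof of Theorem~\ref{thm:analog-5-4-21} by tracking the $\BN^m$-grading inherited from $\bbk[X]$ in parallel with the $\Gamma$-grading. First I would form $Z=X\times(G\md U')$ under the diagonal $G$-action; this is a factorial $G$-variety with only rational singularities, and $\bbk[Z]=\bbk[X]\otimes\ca$ carries the combined $\BN^m\times\Gamma$-grading. By the transfer principle, $\bbk[Z]^G\simeq\bbk[X]^{U'}$ as bi-graded algebras. Applying Knop \cite{kn86} to $\pi_G:Z\to Z\md G$ furnishes an injective, degree-$0$, $G$-equivariant homomorphism $\bar\gamma:\Omega_Z\to\wedge^m\g^*\otimes\pi_G^*(\Omega_{Z\md G})$, where $m=\max_z\dim G{\cdot}z$, and $\bar\gamma(\omega_Z)=c\cdot\omega_{Z\md G}$ for a unique $c\in(\wedge^m\g^*\otimes\bbk[Z])^G$. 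Since $\omega_Z=\omega_X\otimes\omega_{G\md U'}$, one reads off
\[
\deg c=\bigl(q(X)-q(X\md U'),\ b(\ca)-b(X\md U')\bigr).
\]
Parts (1) and (2) follow at once: the lower bounds come from the rational singularities of $X\md U'$ (Theorem~\ref{thm:RS}) via \cite[Prop.\,4.3]{br-these}, while the upper bounds hold because $c\in\bbk[Z]$ lies in an algebra graded by the pointed monoid $\BN^m\times\Gamma$ with $\bbk[Z]_{(0,0)}=\bbk$ (using $\ca_0=\bbk$ from Lemma~\ref{lm:halfspace}).

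For part (3) I would first replace (ii) by the equivalent codim-$2$ condition on $\tilde D=\{z\in Z:\dim G_z>0\}$: as in Theorem~\ref{thm:analog-5-4-21}, $\tilde D\cap(X\times G/U')\to G/U'$ is a bundle with fiber $gD$, of codim $\codim_X D$, while the boundary $G\md U'\setminus G/U'$ has codim $\ge 2$ by Theorem~\ref{thm:model}. The clean implication is (ii)$\Rightarrow$(i)\,$\wedge$\,(iii): codim $\ge 2$ of $\tilde D$ forces $m=\dim G$ (else $\tilde D=Z$), hence $\wedge^m\g^*$ is trivial and $c\in\bbk[Z]^G$; Knop's codim-$1$ identity $\tilde D=\{c'=0\}$ shows $c$ has no divisorial zeros, so $c$ is a unit in the factorial algebra $\bbk[Z]$; positivity of the grading forces this unit to lie in $\bbk[Z]_{(0,0)}=\bbk$, giving $\deg c=(0,0)$ and hence both (i) and (iii). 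The converse (i)\,$\wedge$\,(iii)$\Rightarrow$(ii) follows by running this backwards: $\deg c=(0,0)$ puts $c\in(\wedge^m\g^*)^G$, which is nonzero only if $m=\dim G$ (forced since $c'\ne 0$ on the regular locus), so $c\in\bbk^{\times}$, whence $\tilde D$ has no codim-$1$ component.

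The hard part will be showing that each of (i) and (iii) alone already implies (ii); this is the essential subtlety absent in Theorem~\ref{thm:analog-5-4-21}, where the hypothesis $\bbk[X]^G=\bbk$ of $(\mathfrak C_1)$ collapsed $c$ to a scalar at once. From (i) alone, $\Gamma$-degree $0$ gives $c\in(\wedge^m\g^*\otimes\bbk[X])^G$, so $c'$ factors through the projection to $X$ and $\g_z=\g_x\cap\g_v$ depends only on $x$ at a generic $z=(x,v)$. The intersection $\bigcap_{v\in G/U'}\g_v$, being a $G$-invariant Lie subalgebra contained in the nilpotent $\Lie U'$, must vanish in the semisimple $\g$; this forces $\g_z=0$, $m=\dim G$, and $c\in\bbk[X]^G$. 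Closing the loop then requires ruling out a non-scalar $c$: Knop's identification $\tilde D=\{c'=0\}=\{c=0\}\times(G\md U')$ in codim~$1$ would force every codim-$1$ component of $D$ to be $G$-invariant, and a $\BN^m$-degree comparison on the irreducible factorisation of $c$ against that invariance constraint forces $c\in\bbk$. The symmetric implication (iii)$\Rightarrow$(ii) is cleaner: $\BN^m$-degree $0$ gives $c\in(\wedge^m\g^*\otimes\ca)^G$, and $\ca^G=\bbk$ (since $G$ acts transitively on $G/U'$) together with the analogous generic-stabiliser argument on the $X$-factor reduces $c$ directly to a scalar.
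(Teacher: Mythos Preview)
The paper provides no proof here beyond the instruction to adapt \cite[Theorem~5.4.26]{pa99} to the $U'$-setting, so your proposal is already more detailed than what the paper offers. Your overall architecture---forming $Z=X\times(G\md U')$ with the $\BN^m\times\Gamma$-grading, invoking Knop's injection, and reading off $\deg c=(q(X)-q(X\md U'),\,b(\ca)-b(X\md U'))$---is exactly the intended adaptation of the argument for Theorem~\ref{thm:analog-5-4-21}. Parts (1), (2), and the implication (ii)\,$\Rightarrow$\,(i)$\wedge$(iii) are handled correctly, and your sketch for (iii)\,$\Rightarrow$\,(ii) is essentially complete: once the generic-stabiliser argument yields $m=\dim G$, the fact that $\ca^G=\bbk$ (as $G/U'$ is a dense $G$-orbit in $G\md U'$) forces $c$ to be scalar, giving (i) and (ii) at once.

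There is, however, a real gap in your (i)\,$\Rightarrow$\,(ii). You correctly deduce $m=\dim G$ and $c\in\bbk[X]^G$, and you correctly observe that if $c$ were non-constant then the codimension-$1$ part of $D$ would be $G$-stable. But the closing phrase ``a $\BN^m$-degree comparison on the irreducible factorisation of $c$ against that invariance constraint forces $c\in\bbk$'' is not an argument: nothing you have written rules out a $G$-stable divisor $Y\subset X$ on which the generic $U'$-stabiliser is positive-dimensional, and the $\BN^m$-degree of $c$ is precisely $q(X)-q(X\md U')$, which is what you are trying to show vanishes. The asymmetry with (iii) is genuine---there you exploit $\ca^G=\bbk$, whereas here $\bbk[X]^G$ need not reduce to $\bbk$ under $(\mathfrak C_2)$. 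This is exactly the place where one must consult the actual proof of \cite[Theorem~5.4.26]{pa99} (the same asymmetry already appears in the $U$-case, where the condition $\un b=(2,\dots,2)$ likewise lands $c$ only in $\bbk[X]^G$) and transport whatever additional mechanism is used there.
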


\noindent
We leave it to the reader to adapt the proof of Theorem~5.4.26 in \cite{pa99}
to the  $U'$-setting.

These results may (and will) be applied to describing $G$-varieties $X$ with
polynomial algebras $\bbk[X]^{U'}$.
Suppose for simplicity that  $\bbk[X]$ is $\BN$-graded (i.e., $m=1$). If $f_1,\dots,f_s$ are algebraically independent homogeneous generators of 
$\bbk[X]^{U'}$, then $\sum\deg f_i =q(X\md U')\le  q(X)$. In particular, if $X$ is a $G$-module
with the usual $\BN$-grading of $\bbk[X]$, then $\sum\deg f_i \le \dim X$.
Similarly, if $\omega_i$ is the $T$-weight of $f_i$, then $\sum_{i=1}^s\omega_i \le
2\rho-|\Pi|$. 
The idea to use an {\sl a priori} information on the Poincar\'e series for classifying
group actions with polynomial algebras of invariants is not new. It goes back to 
T.A.\,Springer \cite{sp80}. Since then it was applied many times to various group actions.

\section{Some combinatorics related to $U'$-invariants}
\label{sect:combin}

\noindent
In previous sections, we have encountered some interesting objects in $\mathfrak X$ 
related to the study of $U'$-invariants. These are $b(\ca)=2\rho-|\Pi|$, the set of $T$-weights
in $\sfr(\lb)^{U'}$ (denoted $I_\lb$), and the monoid $\Gamma$ generated by 
$\varpi_i, \varpi_i-\ap_i$ for all $i\in \{1,\dots,r\}=:[r]$.

\begin{prop}   \label{lm:dom_weights}
\ \phantom{x} 
\begin{itemize}
\item[(i)] \ If $G$ has no simple ideals $SL_2$, then $2\rho-|\Pi|$ is a strictly dominant weight;
\item[(ii)] \  For any $\lb\in\mathfrak X_+$, the weight $|I_\lb|$ is dominant.
Furthermore, $(|I_\lb|, \ap_i)>0$ if and only if there is $j$ such that $(\lb,\ap_j^\vee)>0$
and $(\ap_i,\ap_j)>0$.
\end{itemize}
\end{prop}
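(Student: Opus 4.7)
My plan is to reduce both parts to a direct pairing with the simple coroots $\ap_i^\vee$, using $(\varpi_j,\ap_i^\vee)=\delta_{ij}$ together with the elementary fact that $(\ap_j,\ap_i^\vee)\le 0$ for distinct simple roots, with equality iff the nodes $i$ and $j$ are not joined in the Dynkin diagram.

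For (i), I would simply pair $2\rho-\sum_j\ap_j$ with $\ap_i^\vee$:
\[
\Bigl(2\rho-\sum_j\ap_j,\ap_i^\vee\Bigr)\;=\;2-(\ap_i,\ap_i^\vee)-\sum_{j\ne i}(\ap_j,\ap_i^\vee)\;=\;-\sum_{j\ne i}(\ap_j,\ap_i^\vee)\;\ge\; 0.
\]
This pairing is strictly positive iff at least one $\ap_j$ with $j\ne i$ is adjacent to $\ap_i$ in the Dynkin diagram. The hypothesis that $G$ has no simple ideal $SL_2$ means precisely that no node of the Dynkin diagram is isolated, so the pairing is positive for every $i$, proving strict dominance.

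For (ii), set $n_j:=(\lb,\ap_j^\vee)$ and $N:=\prod_j(n_j+1)$. Theorem~\ref{thm:main1}(i) identifies $I_\lb$ with the multi-index box $\{\lb-\sum_j a_j\ap_j\mid 0\le a_j\le n_j\}$, and using $\sum_{a=0}^{n_j}a=n_j(n_j+1)/2$ a routine summation over this box yields the closed form
\[
|I_\lb|\;=\;\sum_{\mu\in I_\lb}\mu\;=\;N\Bigl(\lb-\tfrac12\sum_j n_j\,\ap_j\Bigr).
\]
Pairing with $\ap_i^\vee$, the $j=i$ contribution $-\tfrac{N}{2}n_i(\ap_i,\ap_i^\vee)=-N n_i$ cancels the $Nn_i$ coming from $\lb$, leaving
\[
(|I_\lb|,\ap_i^\vee)\;=\;-\tfrac{N}{2}\sum_{j\ne i}n_j(\ap_j,\ap_i^\vee),
\]
a sum of nonnegative terms. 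Hence $|I_\lb|$ is dominant, and the pairing is strictly positive iff there is some $j\ne i$ with both $n_j>0$ (equivalently $(\lb,\ap_j^\vee)>0$) and $(\ap_j,\ap_i^\vee)<0$, i.e.\ $\ap_j$ adjacent to $\ap_i$ in the Dynkin diagram. This is the criterion claimed, once one reads the condition involving $(\ap_i,\ap_j)$ in the statement as the adjacency condition for distinct simple roots (equivalently, $(\ap_i,\ap_j^\vee)\ne 0$).

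There is no substantive obstacle; the only step requiring attention is the bookkeeping that produces the closed form for $|I_\lb|$, after which both assertions drop out from inspecting the sign of a single elementary sum.
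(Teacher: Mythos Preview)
Your argument is correct and follows essentially the same approach as the paper: both compute the pairing $(|I_\lb|,\ap_i^\vee)$ by summing over the box $\{0\le a_j\le n_j\}$ and reduce to the nonnegativity of $-\sum_{j\ne i}n_j(\ap_j,\ap_i^\vee)$. The only difference is organizational---the paper slices $I_\lb$ into one-dimensional strips in the $i$-th direction and pairs strip by strip, whereas you first write down the closed form $|I_\lb|=N\bigl(\lb-\tfrac12\sum_j n_j\ap_j\bigr)$ and then pair; the resulting formula and positivity criterion are identical, and you are right to read the condition ``$(\ap_i,\ap_j)>0$'' as the adjacency condition for distinct simple roots.
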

\begin{proof}
(i) is obvious.

\noindent
(ii) Recall that $I_\lb=\{\lb -\sum_{i=1}^r c_i\ap_i\mid 0\le c_i\le (\lb,\ap_i^\vee), \ 
 i=1,\dots,r\}$.
Choose $i\in [r]$ and slice $I_\lb$ into the layers, where all coordinates $c_j$ with $j\ne i$ are
fixed, i.e., consider
\[
   I_\lb(c_1,\dots,\widehat{c_i},\dots,c_r)=\{\lb -\sum_{j:j\ne i} c_j\ap_j- c_i\ap_i \mid 0\le c_i\le (\lb,\ap_i^\vee)\}.
\]
Then one easily verifies that
$\bigl(|I_\lb(c_1,\dots,\widehat{c_i},\dots,c_r)|, \ap_i^\vee\bigr)=((\lb,\ap_i^\vee)+1)(-\sum_{j\ne i}c_j\ap_j,\ap_i^\vee)\ge 0$. Hence $(|I_\lb|,\ap_i^\vee)\ge 0$, and the condition of positivity is also
inferred.
\end{proof}

Let $\mathcal C$ be the cone in $\mathfrak X_\BQ$ generated $\Gamma$, i.e., 
by all weights $\varpi_i, \varpi_i-\ap_i$. 
Consider the dual cone 
$\check{\mathcal C}:=\{\eta\in \mathfrak X_\BQ \mid (\eta,\varpi_i)\ge 0 \ \& \  
(\eta,\varpi_i-\ap_i)\ge 0  \text{ for all } \ i\}$.

\begin{thm}   \label{thm:dual-cone}
The cone $\check{\mathcal C}$ is generated by the non-simple positive roots.
\end{thm}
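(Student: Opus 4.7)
Denote by $\mathcal D$ the convex cone in $\mathfrak X_\BQ$ generated by the non-simple positive roots. The plan is to prove $\check{\mathcal C}=\mathcal D$ by establishing both inclusions.

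For $\mathcal D\subseteq\check{\mathcal C}$ I verify that an arbitrary non-simple positive root $\gamma=\sum_j c_j\ap_j$ satisfies both defining inequalities. Since $(\gamma,\varpi_i)=\tfrac{(\ap_i,\ap_i)}{2}\,c_i\ge 0$ is immediate, only $(\gamma,\varpi_i-\ap_i)\ge 0$, equivalent to $c_i\ge\langle\gamma,\ap_i^\vee\rangle$, needs verification. For this I use the $\ap_i$-string $\gamma-p\ap_i,\ldots,\gamma+q\ap_i$ through $\gamma$ (with $p,q\ge 0$). When $c_i=0$, the inequality $\langle\gamma,\ap_i^\vee\rangle=\sum_{j\ne i}c_j\langle\ap_j,\ap_i^\vee\rangle\le 0$ is immediate. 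When $c_i>0$, non-simplicity of $\gamma$ provides some $j\ne i$ with $c_j>0$, so the bottom $\gamma-p\ap_i$ of the string has positive $\ap_j$-coefficient, hence is a positive root, forcing its $\ap_i$-coefficient $c_i-p\ge 0$; therefore $\langle\gamma,\ap_i^\vee\rangle=p-q\le p\le c_i$.

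For $\check{\mathcal C}\subseteq\mathcal D$, given $\eta=\sum b_i\ap_i\in\check{\mathcal C}$, I first reduce to the case of connected support. If the support $S=\{i:b_i>0\}$ decomposes as $S_1\sqcup S_2$ with $S_1,S_2$ in different Dynkin components, then $n_{ji}:=-\langle\ap_j,\ap_i^\vee\rangle=0$ across these components, so the inequalities defining $\check{\mathcal C}$ split and yield $\eta=\eta_1+\eta_2$ with each $\eta_k\in\check{\mathcal C}$ supported in $S_k$. The case $|S|=1$ is vacuous since then the constraint at the lone vertex forces $b_{i_0}\le 0$. So assume $S$ is connected with $|S|\ge 2$. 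I then argue by descending induction on the dimension of the smallest face of $\check{\mathcal C}$ containing $\eta$ (base case $\eta=0\in\mathcal D$). For the inductive step I seek a non-simple positive root $\gamma$ supported in $S$ lying on the same face as $\eta$ (so that every constraint $b_i=\langle\eta,\ap_i^\vee\rangle$ tight on $\eta$ is also tight on $\gamma$); subtracting the largest feasible $\lambda>0$ times $\gamma$ brings $\eta-\lambda\gamma$ to a strictly smaller face, and induction closes. The natural candidate is the highest root $\theta_T$ of the sub-root-system on some connected $T\subseteq S$, chosen so that the pattern of tight constraints of $\theta_T$ matches those of $\eta$.

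The main obstacle is the construction of such an adapted non-simple positive root. Concretely, given the set $T_B\subseteq S$ of indices $i$ with $b_i=\langle\eta,\ap_i^\vee\rangle$, one must produce $\gamma=\sum d_j\ap_j$, a non-simple positive root supported in $S$, satisfying $d_i=\langle\gamma,\ap_i^\vee\rangle$ for every $i\in T_B$. I expect this to be handled by a finite case analysis on connected subdiagrams and patterns of tight constraints, taking $\gamma=\theta_T$ for an appropriate connected $T\subseteq S$ determined by $T_B$ and the Dynkin geometry, or alternatively to follow from the dual of the simplicial partition of $\mathcal C$ indicated in the introduction.
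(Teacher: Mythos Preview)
Your argument for $\mathcal D\subseteq\check{\mathcal C}$ is correct. The paper does this in one line via the simple reflection: $(\varpi_i-\ap_i,\delta)=(s_i\varpi_i,\delta)=(\varpi_i,s_i\delta)\ge 0$, since $s_i\delta\in\Delta^+$ when $\delta$ is non-simple; your root-string computation is a valid alternative.

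For $\check{\mathcal C}\subseteq\mathcal D$ there is a genuine gap. Your ``main obstacle'' is not a technicality but essentially the theorem itself: exhibiting, on every face $F$ of $\check{\mathcal C}$ of positive dimension, a non-simple positive root lying on $F$ is equivalent to showing that every extreme ray of $\check{\mathcal C}$ is spanned by a non-simple positive root, which is precisely the content of $\check{\mathcal C}\subseteq\mathcal D$. Your suggestion to take $\gamma=\theta_T$ for a well-chosen connected $T\subseteq S$ may be workable after a type-by-type analysis, but none is provided, and the remark following the theorem in the paper (that already in types other than $\GR{A}{r}$ and $\GR{C}{r}$ one needs roots of height~$4$ among the edges of $\check{\mathcal C}$) indicates such an analysis would not be entirely trivial.

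The paper avoids all of this by proving the \emph{dual} inclusion $\check{\mathcal D}\subseteq\mathcal C$ instead (equivalent by biduality of closed convex cones). Given $\mu$ with $(\mu,\delta)\ge 0$ for all $\delta\in\Delta^+\setminus\Pi$, set $J=\{j:(\mu,\ap_j)<0\}$. The crucial observation is that $J$ is \emph{disjoint} on the Dynkin diagram: if $j,j'\in J$ were adjacent then $\ap_j+\ap_{j'}$ would be a non-simple positive root with $(\mu,\ap_j+\ap_{j'})<0$. Because $J$ is disjoint, the element $\prod_{j\in J}s_j\in W$ carries $\{\varpi_i:i\notin J\}\cup\{\varpi_j-\ap_j:j\in J\}$ to $\{\varpi_1,\dots,\varpi_r\}$, so this set is a basis of $\mathfrak X_\BQ$. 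Writing $\mu=\sum_{i\notin J}b_i\varpi_i+\sum_{j\in J}a_j(\varpi_j-\ap_j)$, one has $a_j=-(\mu,\ap_j^\vee)>0$ at once; and for $i\notin J$, pairing $\mu$ with $w_i(\ap_i)$, where $w_i=\prod_{j\in J,\ j\text{ adjacent to }i}s_j$, gives $b_i(\varpi_i,\ap_i)=(\mu,w_i\ap_i)\ge 0$ since $w_i\ap_i$ is either $\ap_i$ or a non-simple positive root. Thus $\mu\in\mathcal C$, with no case analysis and no appeal to the face structure of $\check{\mathcal C}$.
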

\begin{proof}
1) \ Let $\mathcal K$ denote the cone in $\mathfrak X_\BQ$ generated by 
$\Delta^+\setminus \Pi$. 
It is easily seen that $\mathcal K\subset \check{\mathcal C}$. Indeed, let
$\delta\in \Delta^+\setminus \Pi$. Then $(\varpi_i,\delta)\ge 0$.
If $s_i\in W$ is the
reflection  corresponding to $\ap_i\in\Pi$, then $s_i(\varpi_i)=\varpi_i-\ap_i$
and $s_i(\delta)\in \Delta^+$.
Hence $(\varpi_i-\ap_i,\delta)=(\varpi_i,s_i(\delta))\ge 0$.

2) \ Conversely, we prove that $\check{\mathcal K}\subset \mathcal C$.
We construct a partition of $\check{\mathcal K}$ into finitely many simplicial cones, and show
that each cone belong in $\mathcal C$.

Suppose that $\mu\in\mathfrak X$ and $(\mu,\delta)\ge 0$ for all $\delta\in \Delta^+\setminus
\Pi$. 
Set $J=J_{(\mu)}=\{ j\in[r]\mid (\mu,\ap_j)<0\}$. 
We identify the elements of $[r]$ with the corresponding
nodes of the Dynkin diagram of $G$.
The obvious but crucial observation is that the nodes in $J$ are disjoint on the 
Dynkin diagram. (Such subsets $J$ are said to be {\it disjoint}.)

{\bf Claim.} {\it The $r$ vectors $\varpi_i$ ($i\not\in J$), $\varpi_j-\ap_j$ ($j\in J$) form a basis
for  $\mathfrak X_\BQ$}.
\\
{\sl Proof.} \ Since $J$ is disjoint, $\prod_{j\in J}s_j\in W$ takes these $r$ vectors to 
$\varpi_1,\dots, \varpi_r$.

Thus, we can uniquely write 
\[
  \mu=\sum_{i\not\in J}b_i\varpi_i +\sum_{j\in J}a_j(\varpi_j-\ap_j), \quad b_i,a_j\in \BQ .
\]
By the assumption, $(\mu,\ap_i)\ge 0$ if and only if $i\not\in J$.
For $j\in J$, we have $(\mu,\ap_j^\vee)=-a_j<0$, i.e., $a_j>0$.
It is therefore suffices to prove that all $b_i$ are nonnegative.
Choose any $i\not\in J$. Let $J[i]$ denote the set of all nodes in $J$ that are adjacent to
$i$. Set $w_i=\prod_{j\in J[i]}s_j \in W$. (If $J[i]=\varnothing$, then $w=1$.)
Then $w_i(\ap_i)$ is either $\ap_i$ or a non-simple positive root.
In both cases, we know that $(\mu, w_i(\ap_i))\ge 0$. On the other hand, this scalar product is equal to
$(w_i(\mu),\ap_i)=b_i(\varpi_i,\ap_i)$. Thus, each $b_i$ is nonnegative and $\mu\in 
\mathcal C$.
\end{proof}

\begin{rmk}
The argument in the second part of proof shows that $\mathcal C$ is the union of simplicial cones
parametrised  by the disjoint subset of the Dynkin diagram. For any such set $J\subset [r]$, let
$\mathcal C_J$ denote the simplicial cone generated by 
$\varpi_i$ ($i\not\in J$), $\varpi_j-\ap_j$ ($j\in J$). Then
\[
    \mathcal C= \bigcup_{J \text{ disjoint}}  \mathcal C_J .
\]
Here $\mathcal C_\varnothing$ is the dominant Weyl chamber and 
$\mathcal C_J=(\prod_{j\in J} s_j)\mathcal C_\varnothing$. Furthermore, if 
$\mathcal C_J^o=
\{\sum_{i\not\in J}b_i\varpi_i +\sum_{j\in J}a_j(\varpi_j-\ap_j)\mid a_j >0,\ b_i\ge 0\}$, then
\[
    \mathcal C= \bigsqcup_{J \text{ disjoint}}  \mathcal C_J^o .
\]
\end{rmk}

\begin{rmk}
It is a natural problem to determine the edges (one-dimensional faces) of the cone 
$\check{\mathcal C}$. We can prove that, for $\GR{A}{r}$ and $\GR{C}{r}$, the edges are 
precisely the roots of height 2 and 3. 
However, this is no longer true in the other cases, because 
a root of height 4 is needed.
\end{rmk}

\section{Irreducible representations of simple Lie algebras with 
polynomial algebras of $U'$-invariants}
\label{sect:classif}

In this section, we obtain the list of all irreducible representations of simple Lie algebras with 
polynomial algebras of $U'$-invariants. If $G=SL_2$, then $U'$ is trivial and so is the classification problem. Therefore we assume that $\rk G\ge 2$.

\begin{thm}   \label{thm:classif}
Let $G$ be a connected simple algebraic  group with $\rk G\ge 2$ and 
$\sfr(\lb)$ a simple $G$-module. The following conditions are equivalent:
\begin{itemize}
\item[\sf (i)] \  $\bbk[\sfr(\lb)]^{U'}$ is generated by homogeneous algebraically independent
polynomials;
\item[\sf (ii)] \  Up to the symmetry of the Dynkin diagram of $G$, the weight  $\lb$ occurs 
in Table~\ref{tab1}. 
\end{itemize}
\end{thm}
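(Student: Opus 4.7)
My plan is to combine three tools developed earlier in the paper. First, Lemma~\ref{prop:min-stab-U'} together with Remark~\ref{rmk:gen-stab-U'}(1) computes the Krull dimension $k := \dim V - \dim U' + \dim(U_\star,U_\star)$ of $\bbk[V]^{U'}$ directly from the monoid $\mathcal M(V)$ of highest weights appearing in $S^\bullet V^*$, and hence from representation-theoretic data alone. Second, Theorem~\ref{thm:base-X-U'} turns any $T$-homogeneous generating set of $\bbk[V]^U$ into a generating set for $\bbk[V]^{U'}$ whose size is bounded by $\sum_i \prod_j((\lb_i,\ap_j^\vee)+1)$. Third, Theorem~\ref{thm:analog-5-4-26} supplies the twin Poincar\'e-series inequalities $\sum d_a \le \dim V$ and $\sum \mu_a \le 2\rho - |\Pi|$, with equality in one forcing equality in the other, which any polynomial $\bbk[V]^{U'}$ must satisfy.

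For the implication (ii)$\Rightarrow$(i), I would treat the entries of Table~\ref{tab1} one by one. For each $V = \sfr(\lb)$ listed, the algebra $\bbk[V]^U$ is either known from Brion's classification or directly computable, so Theorem~\ref{thm:base-X-U'} supplies an explicit candidate generating set $\{g_1,\dots,g_s\}$ for $\bbk[V]^{U'}$. I would then verify that $s=k$ and that the $g_a$ are algebraically independent\,---\,the latter either by a Jacobian computation, or by confirming $\sum_a \deg g_a = \dim V$, which via the equality case of Theorem~\ref{thm:analog-5-4-26} also forces the matching weight identity $\sum_a (\text{weight of } g_a) = 2\rho - |\Pi|$.

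For the implication (i)$\Rightarrow$(ii), I would filter the simple $G$-modules of each simple $G$ by the numerical constraints. A polynomial $\bbk[V]^{U'}$ must have exactly $k$ generators, and the Poincar\'e inequalities force both $\dim V \ge \sum d_a \ge k$ and the weights of the generators to fit componentwise below $2\rho - |\Pi|$ in some $\Gamma$-adapted basis. As $\lb$ grows, $k$ grows roughly like $\dim V - \dim U'$, while the available ``weight room'' stays bounded by $2\rho - |\Pi|$, so only finitely many $\lb$ can survive both constraints. For each remaining $\lb$ not in Table~\ref{tab1}, I would exhibit either a syzygy among the natural generators produced by Theorem~\ref{thm:base-X-U'}, or show that the natural candidate count exceeds $k$, leaving no room for a polynomial structure.

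The main obstacle is the class of modules $V$ for which $\bbk[V]^U$ is \emph{not} polynomial but $\bbk[V]^{U'}$ might still be, because the extra $U/U'$-cyclic span in each $\sfr(\lb_j)^{U'}$ can absorb relations that obstruct $U$-invariants. Such cases cannot be handled by naively leaning on Brion's list: one must combine the explicit construction of Theorem~\ref{thm:base-X-U'} with case-by-case computation of syzygies. A secondary annoyance is that the weight inequality of Theorem~\ref{thm:analog-5-4-26} is only a componentwise comparison with respect to a $\Gamma$-adapted basis, so in the delicate cases one must choose such a basis judiciously to extract a contradiction. I expect these two points\,---\,and especially the first\,---\,to absorb the bulk of the argument.
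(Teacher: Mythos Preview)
Your plan is close in spirit to the paper's, but there are two genuine gaps, one in each direction.

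\textbf{For (i)$\Rightarrow$(ii).} The obstacle you flag at the end---modules with non-polynomial $\bbk[V]^U$ that might nonetheless have polynomial $\bbk[V]^{U'}$---is real, and your proposal does not resolve it. The paper bypasses it entirely with Proposition~\ref{kvg-free}: if $\bbk[V]^{U'}$ is polynomial and $G\ne SL_3$, then $\bbk[V]^G$ is polynomial (because $TU'$ is epimorphic, so the zero-weight component of $\bbk[V]^{U'}$ is $\bbk[V]^G$, and a graded direct summand of a polynomial ring is polynomial). This reduces the problem not to Brion's list of polynomial $U$-invariant algebras but to the Kac--Popov--Vinberg list of polynomial $G$-invariant algebras, which is finite and explicit. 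Without this reduction you are left filtering \emph{all} simple modules, and your finiteness argument via ``weight room bounded by $2\rho-|\Pi|$'' is not correct as stated: generators of weight zero (i.e.\ $G$-invariants) contribute nothing to $\sum\mu_a$, so the weight inequality alone does not bound their number. The actual bound, $\dim\sfr(\lb)\le 2\dim(U'/U'_\star)+\prod_i((\lb,\ap_i^\vee)+1)$ (Proposition~\ref{prop:upper-bound}), comes from the \emph{degree} inequality, and even then a handful of cases survive it and must be killed by finer counting.

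\textbf{For (ii)$\Rightarrow$(i).} Your plan ``verify that $s=k$'' fails in four entries of the table: $(\GR{B}{5},\varpi_5)$, $(\GR{D}{6},\varpi_6)$, $(\GR{E}{7},\varpi_1)$, $(\GR{F}{4},\varpi_1)$. There Theorem~\ref{thm:base-X-U'} produces $k+1$ generators, and one must exhibit a redundancy. The paper does this with an explicit syzygy: when two basic $U$-invariants $p_1,p_2$ share the same fundamental weight $\varpi_i$, the combination $p_1q_2-p_2q_1$ (with $q_j$ the companion of weight $\varpi_i-\ap_i$) is a $U$-invariant of weight $2\varpi_i-\ap_i$, which in these cases coincides with another basic weight and makes the corresponding generator superfluous. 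Also, your proposed independence check via $\sum_a\deg g_a=\dim V$ does not work: by the paper's final remark, that equality holds in exactly \emph{one} item of the table. The correct argument is simpler---once the generating set has $k$ elements and the algebra has Krull dimension $k$, independence is automatic.
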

\noindent
For each item in the table, the degrees and
weights of homogeneous algebraically independent generators are indicated. 
We use the numbering of simple roots as in \cite{t41}.

\begin{longtable}{l>{$}c<{$}c}
\caption{The simple $G$-modules with polynomial algebras of $U'$-invariants}  \label{tab1}\\
$G$ & \lb & Degrees and weights of homogeneous generators of $\bbk[\sfr(\lb)]^{U'}$\endfirsthead  
 $G$ & \lb & Degrees and weights of homogeneous generators of $\bbk[\sfr(\lb)]^{U'}$
\endhead
  \hline
$\GR{A}{r}$  {\small $(r{\ge} 2)$}  & \varpi_r &  $(1,\varpi_1), (1,\varpi_1-\ap_1)$ \\  \hline
$\GR{A}{2r-1}$  & \varpi_{2}^* & $(1,\varpi_2), (2,\varpi_4),\dots, (r-1,\varpi_{2r-2}),(r,\un{0}),$
 \\     {\small $(r{\ge} 2)$}   
 &   &   $(1,\varpi_2-\ap_2), (2,\varpi_4-\ap_4),\dots, (r-1,\varpi_{2r-2}-\ap_{2r-2})$ \\   
\hline
$\GR{A}{2r}$  & \varpi_{2}^* & $(1,\varpi_2), (2,\varpi_4),\dots (r-1,\varpi_{2r-2}),(r,\varpi_{2r}),$ \\
  {\small $(r{\ge} 2)$}      &   &   $(1,\varpi_2-\ap_2), (2,\varpi_4-\ap_4),\dots, (r,\varpi_{2r}-\ap_{2r})$ \\   \hline
$\GR{B}{r}$    & \varpi_1 &  $(1,\varpi_1), (1,\varpi_1-\ap_1), (2,\un{0})$ \\  \hline
$\GR{B}{3}$    & \varpi_3 &  $(1,\varpi_3), (1,\varpi_3-\ap_3), (2,\un{0})$ \\  \hline
$\GR{B}{4}$    & \varpi_4 &  $(1,\varpi_4), (1,\varpi_4-\ap_4), 
(2,\varpi_1), (2,\varpi_1-\ap_1),(2,\un{0})$ \\  
\hline
$\GR{B}{5}$    & \varpi_5 &  $(1,\varpi_5), (1,\varpi_5{-}\ap_5),  
(2,\varpi_1), (2,\varpi_1{-}\ap_1), (2,\varpi_2), (2,\varpi_2{-}\ap_2), $ 
\\
  & &  $ (3,\varpi_5), 
(3,\varpi_5{-}\ap_5), (4,\varpi_3{-}\ap_3), (4,\varpi_4), (4,\varpi_4{-}\ap_4), (4,\un{0})$ \\  
\hline
$\GR{C}{r}$    & \varpi_1 &  $(1,\varpi_1), (1,\varpi_1-\ap_1)$ \\  
\hline
$\GR{D}{r}$ {\small $(r{\ge} 4)$}    & \varpi_1 &  $(1,\varpi_1), (1,\varpi_1-\ap_1), (2,\un{0})$ \\  
\hline
$\GR{D}{5}$    & \varpi_5 &  $(1,\varpi_4), (1,\varpi_4-\ap_4), (2,\varpi_1), (2,\varpi_1-\ap_1)$\\
\hline
$\GR{D}{6}$    & \varpi_6 &  
$(1,\varpi_6), (1,\varpi_6{-}\ap_6), (2,\varpi_2),  (2,\varpi_2{-}\ap_2), $  \\
 & & 
$ (3,\varpi_6), (3,\varpi_6{-}\ap_6), (4,\varpi_4{-}\ap_4), (4,\un{0})$ \\
\hline
$\GR{E}{6}$    & \varpi_1 &  $(1,\varpi_5), (1,\varpi_5-\ap_5), 
(2,\varpi_1), (2,\varpi_1-\ap_1),(3,\un{0})$ \\
\hline 
$\GR{E}{7}$    & \varpi_1 &  $(1,\varpi_1), (1,\varpi_1{-}\ap_1), (2,\varpi_6), 
(2,\varpi_6{-}\ap_6),$
\\
& & $(3,\varpi_1), (3,\varpi_1{-}\ap_1), (4,\varpi_2{-}\ap_2), (4,\un{0}) $  \\ 
\hline
$\GR{F}{4}$    & \varpi_1 & $(1,\varpi_1), (1,\varpi_1{-}\ap_1), (2,\varpi_1), 
(2,\varpi_1{-}\ap_1), (3,\varpi_2{-}\ap_2), (2,\un{0}), (3,\un{0})$ \\
\hline 
$\GR{G}{2}$    & \varpi_1 &  $(1,\varpi_1), (1,\varpi_1-\ap_1), (2,\un{0})$ \\  \hline                      
\end{longtable}

\vskip1ex\noindent
Before starting the proof, we develop some more tools.
Let $V$ be a simple $G$-module.
{\sl A posteriori}, it appears to be true that if $\rk G>1$ and $\bbk[V]^{U'}$ is polynomial,
then so is $\bbk[V]^U$. Therefore our list is contained in Brion's list of representations with polynomial algebras $\bbk[V]^U$ \cite[p.\,13]{br83}.
However, we could not find a conceptual proof. The following is a reasonable substitute:

\begin{prop}     \label{kvg-free}
Suppose that $\bbk[V]^{U'}$ is polynomial and $G\ne SL_3$.
Then $\bbk[V]^G$ is polynomial. 
\end{prop}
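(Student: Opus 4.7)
The plan is to reduce the polynomiality of $\bbk[V]^G$ to a torus-invariant computation inside the already-polynomial ring $\bbk[V]^{U'}$, and then to use $\rho^\vee$ as a separating functional to show that the only $T$-invariant monomials come from the weight-$0$ generators. First I would invoke Proposition~\ref{prop:epi}: since $G$ is simple with $\rk G\ge 2$ (the running assumption of Section~\ref{sect:classif}) and $G\ne SL_3$, the subgroup $TU'\subset G$ is epimorphic. Hence $\bbk[V]^{TU'}=\bbk[V]^G$, and since $T$ normalises $U'$ this gives the identification
\[
\bbk[V]^G=(\bbk[V]^{U'})^{T}.
\]

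Next I would choose $T$-homogeneous generators $h_1,\dots,h_N$ of $\bbk[V]^{U'}$ (possible by semisimplicity of $T$) with $T$-weights $\mu_1,\dots,\mu_N\in\mathfrak X$. The key structural claim is that every $\mu_i$ lies in the cone $\mathcal C$ of Section~\ref{sect:combin}, i.e.\ the cone spanned by $\{\varpi_j,\,\varpi_j-\ap_j\}$. This I would extract from Theorem~\ref{thm:main1}(i): decomposing $\bbk[V]$ into isotypic components $m_\lb\sfr(\lb)$, any weight $\mu=\lb-\sum_j a_j\ap_j\in I_\lb$ with $0\le a_j\le(\lb,\ap_j^\vee)$ admits the expression
\[
\mu=\sum_{j=1}^r\bigl((\lb,\ap_j^\vee)-a_j\bigr)\varpi_j+\sum_{j=1}^r a_j(\varpi_j-\ap_j),
\]
whose coefficients are all nonnegative, so $\mu\in\mathcal C$.

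To finish, I would use that $(\rho^\vee,\varpi_j-\ap_j)>0$ for every $j$ whenever $G\ne SL_3$ — this was established inside the proof of Proposition~\ref{prop:epi} — together with the obvious $(\rho^\vee,\varpi_j)>0$, giving $(\rho^\vee,\mu)>0$ for every $0\ne\mu\in\mathcal C$. Splitting the generators into $h_1,\dots,h_k$ with $\mu_i=0$ (which already lie in $\bbk[V]^G$) and $h_{k+1},\dots,h_N$ with $(\rho^\vee,\mu_i)>0$, any $T$-invariant monomial $\prod h_i^{a_i}$ must satisfy $\sum a_i\mu_i=0$; pairing with $\rho^\vee$ forces $a_{k+1}=\cdots=a_N=0$. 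Hence $(\bbk[V]^{U'})^T=\bbk[h_1,\dots,h_k]$ is polynomial, and by the first paragraph this coincides with $\bbk[V]^G$. The only delicate ingredient is the structural input $\mu_i\in\mathcal C$; once that is in hand (which Theorem~\ref{thm:main1} supplies directly), the rest is a short positivity argument using $\rho^\vee$.
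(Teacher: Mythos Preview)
Your proof is correct and follows essentially the same route as the paper's. Both arguments use the epimorphicity of $TU'$ (Proposition~\ref{prop:epi}) to identify $\bbk[V]^G$ with the weight-$0$ part of $\bbk[V]^{U'}$, and both use the fact that the nonzero $T$-weights occurring in $\bbk[V]^{U'}$ lie in a pointed cone (the cone $\mathcal C$ generated by $\Gamma$). The paper phrases the conclusion abstractly---$\bigoplus_{\gamma\ne 0}\bbk[V]^{U'}_{\gamma}$ is an ideal complementary to $\bbk[V]^G$, so a minimal generating system of the latter sits inside one of the former---whereas you make the pointedness explicit via the functional $\rho^\vee$ and argue monomial-by-monomial; these are two phrasings of the same mechanism.
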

\begin{proof} As in Section~\ref{sect:poinc},
consider the $\Gamma$-grading   \ 
  $\displaystyle \bbk[V]^{U'}=\bigoplus_{\gamma\in\Gamma}\bbk[V]^{U'}_{\gamma}$.
\\
If $G\ne SL_3$, then 
$TU'$ is epimorphic and hence  $\bbk[V]^{U'}_{0}= \bbk[V]^G$.
Furthermore, since $\Gamma$ generates a convex cone,  
$\bigoplus_{\gamma\ne 0}\bbk[V]^{U'}_{\gamma}$ is a complementary ideal to 
$\bbk[V]^G$.
In this situation, a minimal system of homogeneous generators for $\bbk[V]^G$
is a part of a minimal system of homogeneous generators for $\bbk[V]^{U'}$.
\end{proof}

\begin{rmk}   \label{rmk-SL3}
For $G= SL_3$, it is not hard to verify that the only representations with polynomial algebras of
$U'$-invariants are $\sfr(\varpi_1)$ and $\sfr(\varpi_2)$. The reason is that
$U'$ is the maximal unipotent subgroup of $SL_2\subset SL_3$. 
Therefore, by classical Roberts' theorem, we have
$\bbk[V]^{U'}\simeq \bbk[ V\oplus \sfr_1]^{SL_2}$, where 
$V$ is regarded as $SL_2$-module and $\sfr_1$ is the tautological $SL_2$-module.
All  $SL_2$-modules with polynomial algebras of invariants are known \cite[Theorem\,4]{po83}, and the restriction
of the simple $SL_3$-modules to $SL_2$ are easily computed.
\end{rmk}

\noindent
Let $U'_\star$ denote a $U'$-stabiliser of minimal dimension for points in $\sfr(\lb)$.
Recall that Lemma~\ref{prop:min-stab-U'}  and Remark~\ref{rmk:gen-stab-U'} provide 
effective tools for computing $U'_\star$ and $\dim U'_\star$. If a ring of invariants $\mathfrak A$ 
is polynomial, then elements of a minimal generating system of $\mathfrak A$ are said to be 
{\it basic invariants}.

\begin{prop}   \label{prop:upper-bound}
Suppose that\/ $\bbk[\sfr(\lb)]^{U'}$ is polynomial and $G\ne SL_3$. Then 
\[
  \dim \sfr(\lb)\le 2\dim(U'/U'_\star) + \prod_{i=1}^r((\lb,\ap_i^\vee)+1).
\]
In particular,  $\dim \sfr(\lb)\le 2\dim U' + \prod_{i=1}^r((\lb,\ap_i^\vee)+1)$.
\end{prop}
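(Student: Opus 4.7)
The plan is to apply Theorem~\ref{thm:analog-5-4-26}(2) to $V=\sfr(\lb)$, combined with the dimension formula of Theorem~\ref{thm:main1}, and then to peel off the linear basic invariants of $\bbk[V]^{U'}$.

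As a $G$-module, $V$ is smooth and factorial, so it satisfies $(\mathfrak C_2)$ with $m=1$ for the standard $\BN$-grading. Since $\bbk[V]$ is a polynomial ring on $\dim V$ linear generators, $\cf(\bbk[V];t)=(1-t)^{-\dim V}$ and hence $q(V)=\dim V$. Suppose now that $\bbk[V]^{U'}$ is freely generated by $T$-homogeneous polynomials $F_1,\dots,F_N$ of $\BN$-degrees $d_1,\dots,d_N$ and $T$-weights $\omega_1,\dots,\omega_N$. Lemma~\ref{prop:min-stab-U'} computes the Krull dimension of $\bbk[V]^{U'}$ as $\dim V-\dim(U'/U'_\star)$, so
\[
N=\dim V-\dim(U'/U'_\star).
\]

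A polynomial algebra with generators of bidegrees $(d_i,\omega_i)$ has Poincar\'e series $\prod_i(1-s^{d_i}\un{t}^{\omega_i})^{-1}$; a direct substitution $s\mapsto s^{-1},\ \un{t}\mapsto \un{t}^{-1}$ gives the functional equation with $s$-exponent $\sum_i d_i$, matching the Gorenstein symmetry of Theorem~\ref{thm:analog-5-4-26}. Therefore $q(V\md U')=\sum_i d_i$, and part~(2) of that theorem yields
\[
\sum_{i=1}^{N} d_i \;\le\; q(V)\;=\;\dim V.
\]

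For the lower bound on $\sum_i d_i$, count the linear generators among the $F_j$. They lie in $(V^*)^{U'}=\sfr(\lb^*)^{U'}$, and by Theorem~\ref{thm:main1}(ii) together with the identification $(\lb^*,\ap_{i^*}^\vee)=(\lb,\ap_i^\vee)$ (coming from the fact that $-w_0$ permutes the simple roots), this space has dimension $\prod_{i=1}^r((\lb,\ap_i^\vee)+1)$. Let $N_1$ denote the number of degree-one basic invariants; then $N_1\le \prod_i((\lb,\ap_i^\vee)+1)$ and the remaining $N-N_1$ generators satisfy $d_j\ge 2$, whence
\[
\sum_{i=1}^{N} d_i\;\ge\; N_1+2(N-N_1)\;=\;2N-N_1\;\ge\;2N-\prod_{i=1}^r((\lb,\ap_i^\vee)+1).
\]
Combining this with $\sum_i d_i \le \dim V$ and substituting $N=\dim V-\dim(U'/U'_\star)$ yields the asserted inequality. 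The final sentence of the proposition follows from $\dim(U'/U'_\star)\le \dim U'$.

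The technical heart is the Gorenstein symmetry estimate $q(V\md U')\le q(V)$, which is exactly Theorem~\ref{thm:analog-5-4-26}(2); the assumption $G\ne SL_3$ is precisely what is needed to invoke that theorem (so that $TU'$ is epimorphic and the $\Gamma$-grading admits a $\Gamma$-adapted basis). Once this is in hand, the only remaining ingredient is the trivial counting of linear generators, whose dimension is supplied by Theorem~\ref{thm:main1}.
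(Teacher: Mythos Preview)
Your argument is correct and follows the same route as the paper's proof: apply Theorem~\ref{thm:analog-5-4-26}(2) to get $\sum_i d_i=q(V\md U')\le q(V)=\dim V$, compute the number $N$ of generators via Lemma~\ref{prop:min-stab-U'}, and bound $\sum_i d_i$ from below by separating the linear generators (counted by Theorem~\ref{thm:main1}) from the rest. The only cosmetic difference is that the paper observes that the number of degree-one basic invariants is \emph{exactly} $\prod_i((\lb,\ap_i^\vee)+1)$ (since in a polynomial algebra the linear generators span the degree-one part), whereas you use the weaker inequality $N_1\le\prod_i((\lb,\ap_i^\vee)+1)$; either version suffices.
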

\begin{proof}
We consider $\bbk[\sfr(\lb)]$ with the usual $\BN$-grading by the total degree of polynomial.
Then $\bbk[\sfr(\lb)]^{U'}$ is $\Gamma\times \BN$-graded, and it has a minimal generating
system that consists of (multi)homogeneous polynomials.
Let $f_1,\dots,f_s$ be such a system.
By Theorem~\ref{thm:analog-5-4-26}(ii), we have 
\[ 
   \sum \deg(f_i)=q(\sfr(\lb)\md U')\le q(\sfr(\lb))=\dim \sfr(\lb) .
\]
On the other hand, 
$s=\dim \sfr(\lb)-\dim (U'/U'_\star)$ and the number of basic invariants of degee 1 equals 
$a(\lb):=\prod_{i=1}^r((\lb,\ap_i^\vee)+1)$. 
All other basic invariants are of degree $\ge 2$, and we obtain
\[
   a(\lb)+2(\dim \sfr(\lb)-\dim (U'/U'_{\star})-a(\lb))=a(\lb)+2(s-a(\lb))
   \le q(\sfr(\lb)\md U')\le \dim \sfr(\lb).
\]
Hence $\dim \sfr(\lb)\le 2\dim(U'/U'_\star) + a(\lb)$.
\end{proof}

\un{\it Proof of Theorem~\ref{thm:classif}}.\\
$\mathsf {(i)\Rightarrow (ii)}$.  The list of irreducible representations of simple Lie algebras 
with polynomial algebras $\bbk[V]^G$ is
obtained in \cite{KPV}. By Proposition~\ref{kvg-free}, it suffices 
to prove that the representations  
in \cite[Theorem\,1]{KPV} that do not appear in Table~\ref{tab1} cannot have a polynomial 
algebra of $U'$-invariants. 
The list of representation in question is the following:

\begin{itemize}
\item[I)]  \ $(\GR{A}{r},\varpi_3)$, $r=6,7,8$; $(\GR{A}{7},\varpi_4)$; $(\GR{A}{2},3\varpi_1)$;
$(\GR{B}{r},2\varpi_1)$, $r\ge 2$; $(\GR{D}{r},2\varpi_1)$, $r\ge 4$;
$(\GR{B}{6},\varpi_6)$; $(\GR{D}{8},\varpi_8)$; 
$(\GR{C}{r},\varpi_2)$, $r\ge 4$; $(\GR{C}{4},\varpi_4)$;
the adjoint representations.

\item[II)] \ $(\GR{A}{5},\varpi_3)$; $(\GR{C}{3},\varpi_2)$; $(\GR{C}{3},\varpi_3)$;
$(\GR{D}{7},\varpi_7)$; $(\GR{A}{r},2\varpi_r)$.
\end{itemize}

\textbullet \ \ For list I), a direct application of 
Proposition~\ref{prop:upper-bound} yields the conclusion. 
For instance, consider $\sfr(\varpi_3)$  for $\GR{A}{r}$ and $r=6,7,8$.
Here $a(\varpi_3)=2$ and the second inequality in
Proposition~\ref{prop:upper-bound} becomes
\[
(r+1)r(r-1)/6 \le r(r-1)+2 ,
\]
which is wrong for $r=6,7,8$.
The same  argument applies to all  representations in I), except $(\GR{A}{2},3\varpi_1)$.
(The $SL_3$-case is explained in Remark~\ref{rmk-SL3}.)

\textbullet \ \ For list II), the inequality of  
Proposition~\ref{prop:upper-bound} is true, and  more accurate estimates
are needed.
\\
Consider the case $(\GR{A}{5},\varpi_3)$.
Here $\dim \sfr(\varpi_3)=20$, $\dim U'=10$ and $U'_\star=\{1\}$.
Hence $\dim \sfr(\varpi_3)\md U'=10$. Assume that $\sfr(\varpi_3)\md U'\simeq \mathbb A^{10}$. The number of basic invariants of degree $1$ equals $a(\varpi_3)=2$. It is known
that $\bbk[\sfr(\varpi_3)]^G$ is generated by a polynomial of degree $4$. This is our third basic invariant.
Since we must have $\sum_{i=1}^{10} \deg f_i \le \dim\sfr(\varpi_3)=20$, the only possibility
is that the other $7$ basic invariants are of degree $2$.
However, $\mathcal S^2(\sfr(\varpi_3))=\sfr(2\varpi_3)\oplus\sfr(\varpi_1{+}\varpi_5)$, which shows
that the number of basic invariants of degree $2$ is at most 
$\dim \sfr(\varpi_1{+}\varpi_5)^{U'}=4$. This contradiction shows that 
$\bbk[\sfr(\varpi_3)]^{U'}$ cannot be polynomial. 
Such an argument also works for $(\GR{C}{3},\varpi_2)$, $(\GR{C}{3},\varpi_3)$, and
$(\GR{D}{7},\varpi_7)$. 

For $(\GR{A}{r},2\varpi_r)$, $r\ge 2$, we argue as follows. Here the algebra of $U$-invariants is polynomial, and the degrees and weights of basic $U$-invariants are 
$(1,2\varpi_1), (2,2\varpi_2), \dots, (r,2\varpi_r)$, $(r+1,0)$
\cite{br83}. Using Theorem~\ref{thm:base-X-U'},
we conclude that $\bbk[\sfr(2\varpi_r)]^{U'}$ can be generated by $3r+1$ polynomials whose degrees
are $1,1,1;2,2,2;\dots;r,r,r;r+1$. This set of polynomials can be reduced somehow 
to a minimal generating system. Here 
$\dim \sfr(2\varpi_r)\md U'=\dim\sfr(2\varpi_r)-\dim U'=2r+1$.
Assume that $ \sfr(2\varpi_r)\md U'\simeq \mathbb A^{2r+1}$. Then we can remove $r$ 
polynomials from the above (non-minimal) generating system such that the sum of degrees 
of  the remaining polynomials is at most $\dim \sfr(2\varpi_r)=(r+1)(r+2)/2$. 
This means that the sum of degrees of the $r$ removed polynomials must be at least
$r(r+1)$. Clearly, this is impossible.

\noindent
$\mathsf{(ii)\Rightarrow (i)}$. All representations in Table~\ref{tab1} have a polynomial algebra
of $U$-invariants whose structure is well-understood. 
Therefore, using Theorem~\ref{thm:base-X-U'} we obtain an
upper bound on the number of generators of $\bbk[\sfr(\lb)]^{U'}$. On the other hand, we can
easily compute $\dim \sfr(\lb)\md U'$.  In many cases, these two numbers coincide, which immediately proves that $\bbk[\sfr(\lb)]^{U'}$ is polynomial.
In the remaining cases, we use a simple procedure that allows us to reduce 
the non-minimal generating system provided by Theorem~\ref{thm:base-X-U'}.
This appears to be sufficient for our purposes.

\textbullet \ \ For $G=\GR{D}{5}$, the algebra $\bbk[\sfr(\varpi_5)]^{U}$ has two
generators whose degrees and weights are $(1,\varpi_4)$ and $(2,\varpi_1)$. By 
Theorem~\ref{thm:base-X-U'},
$\bbk[\sfr(\varpi_5)]^{U'}$ can be generated by polynomials of degrees and weights 
$(1,\varpi_4), (1,\varpi_4{-}\ap_4), (2,\varpi_1), (2,\varpi_1{-}\ap_1)$. 
On the other hand, the monoid
$\mathcal M(\sfr(\varpi_5))$ is generated by $\varpi_1,\varpi_4$. Therefore a generic stabiliser
$U_\star$ is generated by the root unipotent subgroups
$U^{\ap_2}$, $U^{\ap_3}$, and $U^{\ap_5}$ (see Remark~\ref{rmk:gen-stab-U'}).
Hence $\dim U_\star=6$ and $\dim U'_\star=3$. Thus
$\dim \sfr(\varpi_5)\md U'=16-15+3=4$ and the above four polynomials freely generate  
$\bbk[\sfr(\varpi_5)]^{U'}$.

The  same method works for  
$(\GR{A}{r},\varpi_r)$;  $(\GR{A}{r},\varpi_{r-1})$;  $(\GR{B}{r},\varpi_1)$;  
$(\GR{C}{r},\varpi_1)$;  $(\GR{D}{r},\varpi_1)$;  $(\GR{B}{r},\varpi_r)$, $r=3,4$;
$(\GR{E}{6},\varpi_1)$.  

There still remain four cases, where this method yields the number of generators that is {\sl one more\/} than $\dim \sfr(\lb)\md U'$. Therefore, we have to prove that one
of the functions provided by Theorem~\ref{thm:base-X-U'} can safely be removed.
The idea is the following. Suppose that $\bbk[\sfr(\lb)]^U$ contains two basic invariants
of the same fundamental weight $\varpi_i$, say $p_1\sim(d_1,\varpi_i)$, 
$p_2\sim (d_2,\varpi_i)$.
Consider the corresponding $U'$-invariant functions $p_1,q_1,p_2,q_2$,
where $q_j\sim (d_j, \varpi_i-\ap_i)$, $j=1,2$. 
Assuming that $p_j,q_j$ are normalised such that $e_i{\cdot}q_j=p_j$, the polynomial
$p_1q_2-p_2q_1\in \bbk[\sfr(\lb)]$ appears to be  $U$-invariant,
of degree $d_1{+}d_2$ and weight $2\varpi_i{-}\ap_i$.
If we know somehow that there is a unique $U$-invariant of such degree and weight,
then  this $U$-invariant is not required for the minimal generating system
of $\bbk[\sfr(\lb)]^{U'}$. For instance, consider the case $(\GR{F}{4},\varpi_1)$.
According to Brion \cite{br83}, the free generators of $\bbk[\sfr(\varpi_1)]^{U(\GR{F}{4})}$ are
$(1,\varpi_1),(2,\varpi_1),(3,\varpi_2),(2,\un{0}),(3,\un{0})$. Theorem~\ref{thm:base-X-U'}
provides a generating system for $\bbk[\sfr(\varpi_1)]^{U'(\GR{F}{4})}$
that consists of eight polynomials, namely:
\[
  (1,\varpi_1), (1,\varpi_1{-}\ap_1), (2,\varpi_1), 
(2,\varpi_1{-}\ap_1), (3,\varpi_2), (3,\varpi_2{-}\ap_2), (2,\un{0}), (3,\un{0}) .
\]
Here the weight $\varpi_1$ occurs twice and $2\varpi_1-\ap_1=\varpi_2$.
Therefore the polynomial $(3,\varpi_2)$ can be removed form this set.
Since $\dim\sfr(\varpi_1)=26$, $\dim U'=20$, and $\dim U'_\star=1$, we have
$\dim\sfr(\varpi_1)\md U'=7$. The other three cases, where it works, are 
$(\GR{B}{5},\varpi_5)$, $(\GR{D}{6},\varpi_6)$, $(\GR{E}{7},\varpi_1)$.

This completes the proof of Theorem~\ref{thm:classif}. \hfill $\Box$

\begin{rmk} For a $G$-module $V$, let 
$\ed (\bbk[V]^{U'})$ denote 
the {\it embedding dimension\/} of $\bbk[V]^{U'}$, i.e., the minimal number of 
generators. Since $\bbk[V]^{U'}$ is Gorenstein,  $\ed (\bbk[V]^{U'})-\dim V\md {U'}=\hd (\bbk[V]^{U'})$ 
is the homological  dimension of $\bbk[V]^{U'}$ (see \cite{po83}).
The same argument as in the proof of $\mathsf{(ii)\Rightarrow (i)}$ shows that for
$(\GR{C}{3},\varpi_2)$, $(\GR{C}{3},\varpi_3)$, and $(\GR{A}{5},\varpi_3)$,  we have 
$\hd(\bbk[V]^{U'})\le 2$. 
Hence these Gorenstein algebras of  $U'$-invariants are complete intersections.
We can also prove that $\bbk[\sfr(2\varpi_r)]^{U'(\GR{A}{r})}$ is a complete intersection,  of
homological dimension  $r-1$.
This means that {\sl a postreriori} the following is true: 
{\it If $G$ is simple, $V$ is irreducible, and $\bbk[V]^U$ is polynomial, then 
$\bbk[V]^{U'}$ is a complete intersection.}
It would be interesting to realise whether it is true in a more general situation.
\end{rmk}

\begin{rmk}  There is a unique item in Table~\ref{tab1}, where the sum of degrees of the basic invariants  equals $\dim \sfr(\lb)$ or, equivalently, the sum of weights equals
$2\rho-|\Pi|$. This is $(\GR{B}{5},\varpi_5)$. By Theorem~\ref{thm:analog-5-4-26}(iii),
this is also the only case, where the set of points in $\sfr(\lb)$
with non-trivial $U'$-stabiliser does not contain a divisor.
\end{rmk}

\end{document}